\newcommand{\Z}{\mathbb{Z}}
\newcommand{\N}{\mathbb{N}}
\newcommand{\Q}{\mathbb{Q}}
\newcommand{\C}{\mathbb{C}}
\newcommand{\tx}{\textrm}
\newcommand{\w}{\upsilon_{n,m}}
\newcommand{\ord}{\hbox{\rm ord}}
\newcommand{\mult}{\hbox{\rm mult$\,$}}
\newcommand{\supp}{\hbox{\rm supp$\,$}}
\newcommand{\NP}{\mathcal{NP}}
\newcommand{\dd}{{\hbox{\rm d}}}
\newtheorem{teorema}{Theorem}[section]
\newtheorem{lema}[teorema]{Lemma}
\newtheorem{defi}[teorema]{Definition}
\newtheorem{ejemplo}[teorema]{Example}
\newtheorem{coro}[teorema]{Corollary}
\newtheorem{prop}[teorema]{Proposition}
\newtheorem{nota}[teorema]{Remark}
\title{On characterizations of nondicritical generalized curve foliations \footnotetext{
     \noindent   \begin{minipage}[t]{4in}
       {\small
       2000 {\it Mathematics Subject Classification:\/} Primary32S65;
       Secondary 32S05.\\
       Key words and phrases: Generalized curve foliation, Weierstrass form, Plane singular analytic curve, Second type foliation.\\
       The first-named author was partially supported by the Spanish Projects
    PID2019-105896GB-I00 and the second-named author was partially supported by CNPq-Brazil.}
       \end{minipage}}}
\author{Evelia R.\ Garc\'{\i}a Barroso, Marcelo E. Hernandes and M. Fernando Hern\'andez Iglesias}
\begin{document}
\maketitle

\begin{abstract}

We characterize nondicrital generalized curve foliations with fixed reduced separatrix. Moreover, we give suficient conditions when a plane analytic curve  is  its reduced separatrix.
For that, we introduce a distinguished expression
 for a given  1-form, called {\it Weierstrass form}. Then, using Weierstrass forms, we characterize the nondicritical generalized curve foliations: first, for foliations with monomial separatrix using toric resolution; second, for
 foliations with reduced separatrix, using the $GSV$-index. In this last case the characterization, which is our main result, could be interpreted in function of  a polar of the foliation and a polar of its reduced separatrix. 

\end{abstract}

\section{Introduction}

 In the study of holomorphic foliations to determine topological information is an important and a non trivial question. A special class of 
 	foliations gives some information about this problem: the generalized curve  foliations.  A foliation is said generalized curve if no saddle-nodes appear in its desingularization process.
Moreover, a nondicrital generalized curve foliation and the union of its separatrices have the same process of reduction of singularities (see [Cam-LN-S], Theorem 2).
	
In this paper, we consider this class of foliations with the aim to present some new characterizations of nondicritical generalized curve foliation by means the $1$-form that defines it.

First, we characterize, in Theorem \ref{monomial}, the nondicrital generalized curved foliations with monomial separatrix using toric resolution, the prenormal form given by Loray in \cite[page 157]{Loray} and the notion of \emph{weighted order} associated with a foliation and a curve (see Definition \ref{worder}). This allows us to give, in Corollary \ref{monomial second type}, the condition in order that a foliation of second type becomes a generalized curve foliation following \cite[Theorem 1.2 (b)]{FS-GB-SM}. Second, using the Weierstrass division of power series, we introduce, in Defi\-nition \ref{Wform},  the notion of \emph{Weierstrass form} of any $1$-form with respect to any polynomial in one variable and coefficients in the complex power ring in one variable with a unit as principal coefficient. We can consider
 the Weierstrass form as  a generalization, to any 1-form, of the prenormal form given by Loray  for foliations with monomial separatrix. \\

In \cite{Brunella}, Brunella stablished  relations among the Baum-Bott, Camacho-Sad and  G\'omez-Mont- Seade-Verjovsky  indices associated to one foliation.  He proved that if the foliation is generalized curve then the Baum-Bott and Camacho-Sad indices are equal and the G\'omez-Mont-Seade-Verjovsky index is zero. This paper goes in this direction.\\

Our  main result  is Theorem \ref{general}, where we characterize the nondicritical generalized curve foliations using the \emph{GSV-index} and the notion of Weierstrass form associated with a $1$-form defining a foliation.
This characterization is given in terms of  intersection numbers and it could be interpreted in the language of polar curves of the foliation and its  union of separatrices.

We precise,  in Proposition \ref{prop:irred}, this characterization for foliations with a single separatrix  and for foliations with a single separatrix of genus 1 in Corollary \ref{coro:genus1}, this last one in terms of the weighted order.

The Weierstrass form has been very useful throughout the development of this work and we consider that it may be interesting in the future for foliation studies. Example of this is Proposition \ref{Fol(f)}, where given $f\in \C\{x\}[y]$ with
$\ord f=\deg_y(f)$ and principal coefficient a unit in $\C\{x\}$,
and using the notion of Weierstrass form of a 1-form ${W}$ (defining a non-dicritical foliation) with respect to $f$, we give suficient conditions when $f$ is the equation of the union of separatrices of  the foliation determined by $W$.

\section{Preliminaries}
\label{preliminaries}
\medskip

Let $f(x,y)\in \mathbb C\{x,y\}$ be a non unit power series,  where $\mathbb C\{x,y\}$ is the set of convergent power series. A {\bf plane curve} $C:\{f(x,y)=0\}$ is by definition the zeroes set determined by $f(x,y)\in \mathbb C\{x,y\}$. The curve $C$ is irreducible (respectively reduced) if $f$ is irreducible in $\mathbb C\{x,y\}$ (respectively $f$ has no multiple factors). An irreducible plane curve is called {\bf branch}. The {\bf multiplicity} of $C$, denoted by $\mult C$,  is by definition the order of the power series $f(x,y)$, that is $\mult C=\ord f$. Remember that the order of $f$ is the minimum of degrees of terms of $f$.

\medskip

 Consider a branch $C:\{f(x,y)=0\}$ of multiplicity $n$. After a change of coordinates we can suppose that $x=0$ is transversal (not tangent) to $C$ at $0$. From Newton's Theorem,  $C$ admits an expansion with rational exponents $y(x^{1/n})$, such that $f(x,y(x^{1/n}))=0$. According to Puiseux, the branch $C$ admits $n$ different expansions $\{y_i(x^{1/n})\}_{1\leq i\leq n}$, where $y_i(x^{1/n})=y(\varepsilon_i x^{1/n})$ and $\{\varepsilon_i\}_{1\leq i\leq n}$ are the $n$th roots of unity in $\C$. We can write $f$, up to product by a unit in $\C\{x,y\}$, as the product

 \[f(x,y)=\prod_{i=1}^n\left(y-y_i(x^{1/n})\right).\]

The expansions $\{y_i(x^{1/n})\}_{1\leq i\leq n}$ are called {\bf Newton-Puiseux roots} of the branch $C$ (or equivalently of $f$). Any Newton-Puiseux root $y_i(x^{1/n})$ is of the form

\[
y_i(x^{1/n})=\sum_{j\geq n}a_j^{(i)}x^{j/n},
\]

\noindent where $j\in \N$, $a_j^{(i)}\in \C$ and it determines a {\bf parametrization} of $C$ as follows
\[
(x_i(t),y_i(t))=\left(t^n,\sum_{j\geq n}a_j^{(i)}t^{j}\right).
\]

It is well-known that if the multiplicity of a branch is bigger than one, then there exist $g\in \N\backslash\{0\}$ and positive integers $\beta_0=n$ and
$\beta_k=\min\{k\,:\, a^{(i)}_k\neq 0\ \mbox{and}\gcd(n,\beta_1,\beta_2,\ldots,\beta_{k-1})\;\hbox{\rm does not divide } k\}$ for $1\leq k\leq g$. In the sequel we put  $e_k=\gcd(\beta_0,\beta_1,\beta_2,\ldots,\beta_k)$ for $0\leq k\leq g$. We get $e_0=n>e_1>\cdots >e_g=1$. Set $n_k:=\frac{e_{k-1}}{e_k}$. In particular, $n=\beta_0=n_1\cdots n_g$. 
 
The sequence $(\beta_0,\beta_1,\beta_2,\ldots,\beta_g)$ is called the {\bf Puiseux characteristic exponents} of the branch $C$ and the number $g$ is called the {\bf genus} of the branch $C$. We denote by  $K(\beta_0,\beta_1,\beta_2,\ldots, \beta_g) $ the set of plane branches with Puiseux characteristic exponents $(\beta_0,\beta_1,\beta_2,\ldots,\beta_g)$. It is well-known, after Brauner-Zariski, that the characteristic exponents determine the topological class of the branch $C$.

\medskip

Let $h_1(x,y), h_2(x,y)\in \C\{x,y\}$ be two power series and $I=(h_{1},h_{2})$ the ideal generated by $h_1,h_2\in \C\{x,y\}$. The {\bf intersection number} of the curves 
$C_i: \{h_i(x,y)=0\}$, $1\leq i\leq 2$,  is 
$i_0(h_1,h_2):=\dim_{\mathbb C}\mathbb{C}\{x,y\}/I.$

\medskip

Given $f\in K(\beta_0,\beta_1,\beta_2,\ldots ,\beta_g)$ the semigroup $\Gamma(f)$ associated to $f$ is 
\[
\Gamma(f)=\{i_0(f,h)\,:\,\ h\in\mathbb{C}\{x,y\}\setminus (f)\}.
\]

\medskip

The semigroup $\Gamma(f)$ admits a unique minimal system of generators, given by $\{v_0,v_1,v_2,\ldots ,v_g\}$, that is, $\Gamma(f)=\langle v_0,v_1,v_2,\ldots ,v_g\rangle=\left \{\sum_{i=0}^{g}\alpha_iv_i\,:\, \alpha_i\in\mathbb{N}\right\}$. 
It is a well know fact that the semigroup $\Gamma(f)$ and the characteristic exponents are mutually determined.
Moreover, we can obtain the minimal system of generators by the Puiseux characteristic exponents using the relations (see \cite{Zariski}, Theorem 3.9):
\begin{equation}
\label{exponentes Puiseux}
v_0=n=\beta_0,\ \ \ v_1=m=\beta_1,\ \ \ v_{i+1}=n_iv_i+\beta_{i+1}-\beta_{i}\ \ \mbox{for}\ i=1,\ldots ,g-1.
\end{equation}

Observe that $e_{i}=\gcd(v_{0},\ldots,v_{i})$ for $0\leq i\leq g$. 

For any reduced plane curve (not necessary irreducible) $C:\{f(x,y)=0\}$, an  important topological invariant, useful in this paper, is the {\bf Milnor number}, that is
the intersection number $\mu(f):=i_0\left(f_{x},f_{y}\right)$, where $f_{y}$ (respectively $f_{x}$) denotes the partial derivative of $f$ with respect to $y$ (respectively $x$).
\medskip

By Teissier's Lemma (see \cite[Chapter II, Proposition 1.2]{Teissier}) we have
\begin{equation}
\label{LTeissier}
i_{0}(f,f_{y})=\mu(f)+i_{0}(f,x)-1.
\end{equation}

We also have
\begin{equation}
\label{LTeissier2}
i_{0}(f,f_{x})=\mu(f)+i_{0}(f,y)-1.
\end{equation}

For more details on plane curves see  for example \cite{Hefez} or \cite{Wall}.

\medskip

Let $\Omega^{1}_{\mathbb{C}^2,0}:=\mathbb{C}\{x,y\}\dd x+\mathbb{C}\{x,y\}\dd y$
be the $\mathbb{C}\{x,y\}$-module of holomorphic 1-form.

\medskip

A {\bf holomorphic foliation singular at the origin} is defined, in a neighbourhood of the origin, by an equation ${\mathcal F}_W: \{W=0\},$ where $W$ is a 1-form
$W=A(x,y)\dd x+B(x,y)\dd y,$ with $A,B \in \C\{x,y\}$ without common factors and such that $A(0,0)=B(0,0)=0$. The {\bf polar} of ${\mathcal F}_W: \{W=0\}$ with respect the direction $(b:-a)\in \mathbb P^{1}$ is the curve $aA(x,y)+bB(x,y)=0$.

\medskip

The {\bf multiplicity} of $W$ is 
$\mult(W):=\min\{\ord(A(x,y)),\ord(B(x,y))\}$. More precisely if $\mult(W)=n_0$ then we can write
$W= \sum_{i+j \geq n_{0} } a_{i,j}x^iy^j\dd x + \sum_{i+j \geq n_{0}} b_{i,j}x^iy^j\dd y$, and for some $i,j$ such that  $i+j=n_{0}$ we get $a_{i,j} \neq 0$ or $b_{i,j}\neq 0$. The {\bf multiplicity} of the foliation ${\mathcal F}_W: \{W=0\}$ is by definition the multiplicity of the 1-form $W$.

\medskip
Let ${\mathcal F}_W$ be a singular foliation at the origin. The {\bf Jacobian matrix} of the linear part of  $W$ is the matrix
\[
\left (
\begin{array}{rr}
-B_x(0,0) & -B_{y}(0,0) \\
A_{x}(0,0) & A_{y}(0,0)
\end{array}
\right ).
\]
Denote by $\lambda_1,\lambda_2$ its complex eigenvalues. We say that the origin is an {\bf irreducible  singularity} of ${\mathcal F}_W$ if one of the following conditions is satisfied:

\begin{enumerate}
    \item $\lambda_1\lambda_2\neq 0$ and $\frac{\lambda_1}{\lambda_2} \notin \Q^{+}$,
    \item $\lambda_1 =0 $ and $\lambda_2 \neq 0$; or $\lambda_2=0$ and $\lambda_1\neq 0$.
\end{enumerate}

If the first condition holds we say that the origin is a {\bf simple} or {\bf non-degenerate singularity} of ${\mathcal F}_W$. Nevertheless if the second condition holds then we say that the origin is a {\bf saddle-node singularity} of ${\mathcal F}_W$.

\begin{ejemplo}\label{ej1}
If $W=y^2\dd x+x\dd y$ then the Jacobian matrix of the linear part of  $W$ is $\left (
\begin{array}{rc}
-1 & 0 \\
0 & 0
\end{array}
\right )$.
Its eigenvalues are $-1$ y $0$, so the origin is a  saddle-node singularity of ${\mathcal F}_W\,$.
\end{ejemplo}

\medskip

Let $\Pi:(M,D)\longrightarrow (\C^2,0)$ the {\bf process of reduction of singularities} of the foliation ${\mathcal F}_{W}$  (see  \cite{Seidenberg}), given by a finite composition of quadratic transformations (blow-ups) having $D=\Pi^{-1}(0)$ as the \emph{exceptional divisor}, which is a finite union of projective lines with normal crossings; and where $M$ is a non-singular analytic manifold. 
There is a 1-form $W'$ such that $\Pi^{-1}(0)\cdot W'=\Pi^*(W):=\Pi^*(\mathcal{F}_W)$. The foliation ${\mathcal F}_{W'}$ is called the {\bf strict transform} of ${\mathcal F}_W$.

\medskip

A foliation  is {\bf generalized curve} if no saddle-nodes appear in its desingularization.

\medskip

A branch $C:\{f(x,y)=0\}$ is a {\bf separatrix} of the foliation ${\mathcal F}_W: \{W=0\}$ if  $f$ divides $W\wedge df$. This is equivalent to the existence of a parametrization  $(x(t),y(t))$  of the branch $C$ such that $A(x(t),y(t))\dd x(t)+B(x(t),y(t))\dd y(t)=0$. In \cite{C-S}, Camacho and Sad prove that any singular foliation in $(\C^2,0)$ admits a separatrix. Their proof is based on the desingularization process of the foliation.  In Example \ref{ej1}  the branches $x=0$ and $y=0$ are separatrices of  ${\mathcal F}_W$. According to the number of separatrices that pass through the singular point of the foliation we can classify the foliations in {\bf dicritical} in the case that we have infinite separatrices or  {\bf nondicritical} when we have a finite number of separatrices.
As a consequence of \cite[Theorem 1]{C-LN-S} we get
\begin{teorema}
\label{comparando multiplicidades}
Let  ${\mathcal F}_W$ be a nondicritical foliation with  union of separatrices $\{f(x,y)=0\}$. Then
$\mult(W)\geq \ord(f)-1$.
\end{teorema}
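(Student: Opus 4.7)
The plan is to compare the tangent cones of $W$ and of $f$ via the first blow-up at the origin. Write $n_0=\mult(W)$ and $m=\ord(f)$, let $W_{n_0}=A_{n_0}\dd x+B_{n_0}\dd y$ be the initial form of $W$, and let $f_m$ denote the tangent cone of $f$. In the blow-up chart $\pi(x,t)=(x,xt)$ one obtains
\[
\pi^{*}W \;=\; x^{n_0}\bigl\{(\widehat A+t\widehat B)\,\dd x + x\widehat B\,\dd t\bigr\},\qquad \widehat A(x,t)=A(x,xt)/x^{n_0},\ \ \widehat B(x,t)=B(x,xt)/x^{n_0}.
\]
An additional factor of $x$ can be extracted from the bracket (making the exceptional divisor non-invariant) exactly when the homogeneous polynomial
\[
H(x,y):=xA_{n_0}(x,y)+yB_{n_0}(x,y)
\]
of degree $n_0+1$ vanishes identically; the nondicritical hypothesis forces this not to happen, so $H\not\equiv 0$.

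The core step is to show that $f_m$ divides $H$ in $\mathbb{C}[x,y]$, with linear factors counted with their multiplicities. Once this is established, a degree comparison yields $m=\deg f_m\leq \deg H=n_0+1$, i.e.\ $\mult(W)\geq \ord(f)-1$. The ``tangent direction'' half of the divisibility is direct: decompose $f=f_1\cdots f_r$ into branches, and for each $f_i$ substitute a Newton-Puiseux parametrization of $\{f_i=0\}$ into the separatrix relation $A\,\dd x+B\,\dd y=0$. Extracting the term of lowest order in the parameter forces $H$ to vanish on the tangent direction of $f_i$, so the linear form $L_i$ underlying $(f_i)_{m_i}$ divides $H$.

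The delicate part, and the main obstacle, is the multiplicity count: if several branches $f_{i_1},\dots,f_{i_s}$ share the same tangent $L$ and their multiplicities sum to $M$, one must show $L^{M}\mid H$. My approach is induction on the length of the reduction of singularities tree of $\mathcal F_W$. The key observation is that after the first blow-up the strict transform $\mathcal F_{W'}$ is still nondicritical at every singular point on the exceptional divisor $E$, and at the point $p\in E$ corresponding to the tangent direction $L$ the local union of separatrices consists of the strict transforms $\widetilde f_{i_j}$ of the branches tangent to $L$ together with the germ of $E$. Applying the inductive hypothesis at $p$, and keeping track of how the multiplicity of a branch and of a foliation evolve under one blow-up, supplies the missing factors of $L$ in $H$. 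The base case is a non-degenerate (simple) singularity at the origin, where $n_0=1$ and the union of separatrices is formed by two smooth transverse branches with $m=2$, giving the equality $1=2-1$ directly.
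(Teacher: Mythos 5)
First, note that the paper does not actually prove this statement: it is quoted as a direct consequence of \cite[Theorem 1]{C-LN-S}, so your attempt is necessarily a different (self-contained) route. Your overall strategy is the classical one: reduce to showing that the tangent cone $f_m=\prod_i L_i^{m_i}$ divides the homogeneous form $H=xA_{n_0}+yB_{n_0}$ of degree $n_0+1$, which is nonzero by nondicriticality, and conclude by comparing degrees. The ``tangent direction'' half is correct. The genuine gap is in the multiplicity count. The inductive hypothesis you invoke at a point $p\in E$ is the multiplicity inequality
$\mult_p(W')\geq \ord_p\bigl(E\cdot\prod_j \widetilde f_{i_j}\bigr)-1=\sum_j \ord_p(\widetilde f_{i_j})$,
and since $\ord_{t_p}H(1,t)\geq \mult_p(W')$ this only yields $\ord_{t_p}H(1,t)\geq \sum_j \ord_p(\widetilde f_{i_j})$. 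What you need is $\ord_{t_p}H(1,t)\geq \sum_j m_{i_j}$, and the multiplicity of a strict transform is in general \emph{strictly smaller} than the multiplicity of the branch downstairs. Already for $W=\dd(y^2-x^3)$ the single branch has $m_1=2$ while its strict transform is smooth, so your inductive bound gives only $\ord_{0}H(1,t)\geq 1$, whereas the required bound is $2$ (and indeed $H=2y^2$). The quantity that reproduces $m_{i_j}$ after one blow-up is not $\ord_p(\widetilde f_{i_j})$ but the intersection number of $\widetilde f_{i_j}$ with the exceptional divisor, which equals $m_{i_j}$. Hence the induction cannot be run on the statement of the theorem itself; it must be run on a strictly stronger statement, for instance: for a nondicritical germ with a smooth invariant curve $E$ and remaining separatrix branches $D_1,\dots,D_s$ at $p$, the order of vanishing at $p$ of the restriction to $E$ of the $\dd x$-coefficient (that is, $\ord_{t_p}H(1,t)$ at the first level) is at least $\sum_j i_p(E,D_j)$. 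Combined with $\sum_{p\in E}\ord_{t_p}H(1,t)=n_0+1$ this gives the theorem, but your sketch, which only controls $\mult_p(W')$, does not self-improve to it.

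Two further points. The base case of your induction must include saddle-nodes, not only non-degenerate singularities: the foliation is assumed nondicritical but not generalized curve, so saddle-nodes may occur in the reduction (there $n_0=1$ and the number of convergent separatrices is $1$ or $2$, so the inequality holds, but the case has to be treated). Also, be aware that the divisibility $f_m\mid H$ really does require $f$ to be the \emph{full} union of separatrices at every infinitely near point: for $W=\dd(y^2-x^3)+a(2x\dd y-3y\dd x)$ with $a\neq 0$ the cusp is invariant, $\mult(W)=1$, and $H=y(2y-ax)$ is not divisible by $y^2$; this does not contradict the theorem only because that foliation turns out to be dicritical. So any repaired induction must, as yours implicitly does, carry the complete local separatrix set along the reduction tree.
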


\begin{ejemplo} If  $W_1=nx\dd y-my\dd x$ with $n$ and $m$ coprime positive integers, then the branches $\{x=0\},\{y=0\}$ and $\{y^n-cx^m=0\},$  $c \neq 0$ are separatrices
of $W_1=0,$ so the foliation ${\mathcal F}_{W_1}$ is dicritical. Nevertheless if
 $W_2=x\dd y+y\dd x$  then the foliation ${\mathcal F}_{W_2}$ is nondicritical since its  only separatrices are $\{x=0\}$ and $\{y=0\}$.
 \end{ejemplo}

Let $C_i:\{f_i(x,y)=0\}$, $1\leq i \leq r$, be the set of different separatrices of a nondicritical foliation ${\mathcal F}_W: \{W=0\}$ and put $f(x,y):=f_1(x,y)\cdots f_r(x,y)$.  The reduced curve $C:\{f(x,y)=0\}$ will be called the {\bf union of the separatrices} of the nondicritical foliation ${\mathcal F}_W$.\\

Other important notion we will use is the Newton polygon that we introduce in the sequel.
Let $S\subset \mathbb N^2$.  We consider the convex hull
$\mathtt {conv}(S)$ of the Minkowski sum $S+\mathbb R^2_{\geq 0}$, where $\mathbb R_{\geq 0}$ denote the non-negative real numbers. By definition, the {\bf Newton polygon} of $S$, denoted by $\mathcal{NP}(S)$, is
the union of the compact edges of the boundary of $\mathtt {conv}(S)$.  

\medskip
The {\bf support of a power series} $f(x,y)=\sum_{i,j}a_{i,j}x^iy^j$ is $\supp f:=\{(i,j)\in \mathbb N^2\;:\; a_{i,j}\neq 0\}$. The {\bf support of a foliation} ${\mathcal F}_W: \{W=0\}$ , where $W=A(x,y)\dd x+B(x,y)\dd y$, is the union of the supports of $x\cdot A(x.y)$ and $y\cdot B(x,y)$, and we will denote it $\supp W$, that is $\supp W=\supp (x\cdot A(x,y)) \cup \supp (y\cdot B(x,y))$. The {\bf Newton polygon of a power series} $f(x,y)\in \mathbb C\{x,y\}$ is by definition the Newton polygon of $\supp f$ and we will denote it $\mathcal{NP}(f)$. The {\bf Newton polygon of a foliation} ${\mathcal F}_W$ is by definition the Newton polygon of $\supp W$, and we will denote it $\mathcal{NP}(W)$. Observe that the Newton polygon depends on coordinates. Remark that $\mathcal{NP}(u\cdot f)=\mathcal{NP}(f)$
for any $u,f\in \mathbb C\{x,y\}$,  where $u$ is a unit. Hence, we can define the {\bf Newton polygon} of the curve $C:\{f(x,y)=0\}$ as the Newton polygon of any of its equations.

\begin{prop}(\cite[Proposition 3.8]{PR})
\label{Newton}
Let $W=A(x,y)\dd x+B(x,y)\dd y$ be a 1-form. If ${\mathcal F}_W$ is a nondicritical generalized curve foliation and $C:\{f(x,y)=0\}$ is  its union of separatrices then the Newton polygons of ${\mathcal F}_W$ and $C$ are the same.
\end{prop}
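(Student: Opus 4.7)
The plan is to compare the two Newton polygons edge-by-edge, identifying their outer normals and their vertices via weighted (toric) blowups. Throughout I exploit the fact that, since $\mathcal F_W$ is a nondicritical generalized curve foliation with union of separatrices $C$, both $\mathcal F_W$ and $C$ admit the \emph{same} process of reduction of singularities. As a first step, I would strengthen Theorem~\ref{comparando multiplicidades} in the generalized curve setting to the equality $\mult(W)=\ord(f)-1$, which follows from the absence of saddle-nodes in the desingularization of $W$. This pins down the common supporting line of slope $-1$ closest to the origin for both $\mathcal{NP}(W)$ and $\mathcal{NP}(f)$, and in particular settles the extreme case where the polygon reduces to a single vertex.

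Next, fix a compact edge of $\mathcal{NP}(f)$ with primitive outer normal $(p,q)\in\mathbb N_{>0}^2$ and examine the weighted initial part $f_{(p,q)}$ of $f$ along $(p,q)$. By Newton's theorem, $f_{(p,q)}$ factors as a product of quasi-homogeneous polynomials of the form $(y^p-c_k x^q)^{m_k}$, each representing a cluster of branches of $C$ with first Puiseux slope $q/p$. Applying the toric blowup $\sigma_{p,q}$, these clusters become disjoint points on the exceptional component $E_{p,q}$. Because $\mathcal F_W$ and $C$ share the same reduction, $E_{p,q}$ must be non-dicritical for $\sigma_{p,q}^*\mathcal F_W$ and the singularities of the transformed foliation on $E_{p,q}$ lie exactly at the cluster points; this forces $(p,q)$ to be an outer normal to an edge of $\mathcal{NP}(W)$ as well. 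The reverse inclusion is symmetric: each edge of $\mathcal{NP}(W)$ with normal $(p,q)$ yields, after $\sigma_{p,q}$, singular points on $E_{p,q}$ through which a separatrix of $\mathcal F_W$ must pass by Camacho--Sad, and every such separatrix is a component of $f$ with first slope $q/p$, hence contributes an edge of $\mathcal{NP}(f)$ with the same outer normal. Vertices lying on a coordinate axis are treated analogously, their position being encoded by the multiplicity with which $x$ or $y$ (if it is a separatrix) divides $f$.

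The main obstacle I anticipate is matching not just the \emph{set} of outer normals, but also the endpoints of each edge, that is, the vertices of the polygon. For this I would compare the multiplicities $m_k$ attached to the factors of $f_{(p,q)}$ with the multiplicities appearing in the weighted initial part of $W$ along $(p,q)$. The key point is that the equality $\mult=\ord-1$ propagates through every weighted blowup of the common reduction tree: at each infinitely near point, the foliation and the corresponding transform of $C$ still satisfy this equality, which, together with the non-existence of saddle-nodes, forces the quasi-homogeneous initial parts of $W$ and of $df$ along $(p,q)$ to vanish with the same multiplicities on each cluster. Once the outer normals and the endpoints of every edge agree, we conclude $\mathcal{NP}(W)=\mathcal{NP}(f)$.
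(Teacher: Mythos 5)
The paper offers no proof of this statement: it is quoted from Rouill\'e \cite[Proposition 3.8]{PR}, so there is no internal argument to compare yours with, and your proposal has to stand on its own. Its overall route (weighted blowups, identity of the reduction processes of $\mathcal F_W$ and $C$, propagation of $\mult(W)=\ord(f)-1$) is the natural one, and the inclusion ``every outer normal of $\mathcal{NP}(f)$ is an outer normal of $\mathcal{NP}(W)$'' can indeed be run as you describe: branches of $C$ with first slope $q/p$ meet the interior of $E_{p,q}$, they force singular points of the transformed foliation there, and those singular points are exactly the zeros in $\mathbb{C}^{*}$ of the quasi-homogeneous \emph{tangency polynomial} $T_{p,q}=p\,(xA)_{(p,q)}+q\,(yB)_{(p,q)}$, where $(\cdot)_{(p,q)}$ denotes the $(p,q)$-initial part; a quasi-homogeneous polynomial with a zero in the torus has at least two monomials, so the $(p,q)$-face of $\supp W$ is an edge.

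The genuine gap is in the converse and in the vertex matching. With the paper's convention $\supp W=\supp(xA)\cup\supp(yB)$, an edge of $\mathcal{NP}(W)$ with normal $(p,q)$ does \emph{not} by itself produce interior singularities on $E_{p,q}$: what controls them is $T_{p,q}$, and a lattice point may lie in $\supp(xA)\cap\supp(yB)$ with the two coefficients cancelling in $T_{p,q}$ (the same cancellation which, when total, is exactly dicriticality of $E_{p,q}$). So $T_{p,q}$ can degenerate to a monomial while the $(p,q)$-face of $\supp W$ is a genuine edge, and your sentence ``each edge of $\mathcal{NP}(W)$ yields singular points on $E_{p,q}$'' is unjustified as stated; ruling out this partial cancellation is precisely where the generalized curve hypothesis has to re-enter. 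Relatedly, the separatrix through an interior singularity is not supplied by Camacho--Sad alone: you need that, after full reduction, every non-corner singularity on an invariant component is simple and therefore carries a second separatrix transverse to the divisor, which uses the absence of saddle-nodes, not mere existence of a separatrix. Finally, the decisive step of matching the vertices is only announced (``the equality $\mult=\ord-1$ propagates through every weighted blowup''). To close it you should invoke the actual mechanism: a Noether-type formula expressing $\upsilon_{p,q}(f)$, respectively $\upsilon_{p,q}(W)$, as a weighted sum of the multiplicities of the strict transforms of $C$, respectively of $\mathcal F_W$ augmented by $1$, at the infinitely near points leading to $E_{p,q}$, combined with the equality of these multiplicities at every infinitely near point for nondicritical generalized curve foliations \cite{C-LN-S}. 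That yields $\upsilon_{p,q}(W)=\upsilon_{p,q}(f)$ for all $(p,q)\in\Z^{+}\times\Z^{+}$, which is equivalent to $\mathcal{NP}(W)=\mathcal{NP}(f)$ and in fact subsumes the whole edge-by-edge discussion, including the problematic converse above.
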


\medskip

Saravia, in her PhD thesis \cite{Nancy} (see also \cite[Example 3.3]{FS-GB-SM}),  shows that the foliation ${\mathcal F}_W$, where $W=((b-1)xy-y^3)\dd x+(xy-bx^2+xy^2)\dd y$, with $b\not\in \Q$, is not a generalized curve foliation but its Newton polygon equals to the Newton polygon of its  union of separatrices $f(x,y)=xy(x-y)$.

\medskip

Milnor number can be also defined in the foliation context.
Let $A(x,y),B(x,y)\in \C\{x,y\}$ with $A(0,0)=B(0,0)=0$. Let $W=A(x,y)\dd x+B(x,y)\dd y$ be a 1-form and consider the singular foliation at the origin ${\mathcal F}_W$.  The {\bf Milnor number at the origin of  the foliation} ${\mathcal F}_W$  is $\mu({\mathcal F}_W):=i_{0}(A(x,y),B(x,y))$. \\

Following  \cite[Theorem 4]{C-LN-S} we have the next characterization of nondicritical generalized curve foliations using Milnor numbers:

\begin{teorema}\label{curvagene}
Let $W=A(x,y)\dd x+B(x,y)\dd y$  be a 1-form. Suppose that  ${\mathcal F}_W$ is a nondicritical generalized curve foliation and $C:\{f(x,y)=0\}$  is its  union of separatrices. Then
 $\mu({\mathcal F}_W)\geq \mu(f)$. The equality holds  if and only if
 ${\mathcal F}_W$ is a generalized curve foliation.
\end{teorema}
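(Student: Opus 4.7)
The plan is to deduce the theorem from the reduction of singularities of $\mathcal{F}_W$, comparing $\mu(\mathcal{F}_W)$ with $\mu(f)$ through a local analysis at the irreducible singularities produced at the end of the desingularization. Let $\pi : M \to (\C^2,0)$ denote the reduction of $\mathcal{F}_W$. Because $\mathcal{F}_W$ is nondicritical, every separatrix of $\mathcal{F}_W$ lifts to a smooth curve crossing transversally some component of the exceptional divisor at a reduced singular point of the strict transform $\tilde{\mathcal{F}}$, and by construction every branch of $C$ appears among these lifted separatrices.

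The heart of the argument is to show that the difference $\mu(\mathcal{F}_W) - \mu(f)$ transforms in a controlled way under a single blow-up. Concretely, at a point $p$ where $\mathcal{F}$ has multiplicity $\nu$ and $C$ has multiplicity $n$, Theorem~\ref{comparando multiplicidades} applied at every infinitely near point yields $\nu \geq n-1$. Combining this with the standard blow-up formulas for the Milnor number of a nondicritical foliation and of a reduced plane curve, one obtains an inductive inequality
\[
\mu_p(\mathcal{F}) - \mu_p(f) \;\geq\; \sum_{q} \bigl(\mu_q(\tilde{\mathcal{F}}) - \mu_q(\tilde{f})\bigr),
\]
where $q$ ranges over the newly created singularities on the exceptional divisor. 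Iterating along $\pi$ reduces the global comparison to the sum of local differences at the irreducible singularities appearing on $M$.

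At such a reduced singularity the situation is dichotomous. If it is simple (non-degenerate with eigenvalue ratio not in $\Q^+$), then $\mu_q(\tilde{\mathcal{F}}) = 1$ and the two transverse smooth separatrices, taken from the divisor and/or from $\tilde{C}$, contribute $\mu_q(\tilde{f}) = 1$, so the local defect vanishes. If it is a saddle-node, then $\mu_q(\tilde{\mathcal{F}}) \geq 2$ while at most two smooth convergent separatrices give $\mu_q(\tilde{f}) \leq 1$, producing a strictly positive defect. Summing these contributions yields $\mu(\mathcal{F}_W) \geq \mu(f)$ and identifies equality with the absence of saddle-nodes, that is, with $\mathcal{F}_W$ being a generalized curve foliation. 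Conversely, when $\mathcal{F}_W$ is generalized curve, \cite[Theorem~2]{C-LN-S} ensures that the reductions of $\mathcal{F}_W$ and of $C$ coincide step by step, so that all local Milnor numbers match and global equality is attained.

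The main obstacle I expect is establishing the two blow-up transformation formulas uniformly, paying careful attention to corner points (where $\tilde{C}$ meets two components of the divisor) versus free points, and especially to handling the weak separatrix at a saddle-node: it may be purely formal and therefore invisible to $f$, yet it still contributes to $\mu_q(\tilde{\mathcal{F}})$. This is precisely the mechanism by which saddle-nodes force the strict inequality in the non-generalized-curve case.
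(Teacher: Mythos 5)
First, a point of reference: the paper does not prove Theorem \ref{curvagene} at all --- it is quoted from \cite[Theorem 4]{C-LN-S} --- so your proposal can only be measured against the original Camacho--Lins Neto--Sad argument. Your outline is essentially a reconstruction of that argument: induction along the reduction of singularities, blow-up formulas for the two Milnor numbers, the inequality $\nu\geq n-1$ of Theorem \ref{comparando multiplicidades} at every infinitely near point, and the dichotomy simple singularity versus saddle-node at the terminal stage. The strategy is the right one, and your closing remark about the (possibly formal) weak separatrix of a saddle-node correctly identifies the mechanism forcing strict inequality.

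There is, however, a genuine imprecision at the central step. The displayed inductive inequality $\mu_p(\mathcal{F})-\mu_p(f)\geq\sum_q\bigl(\mu_q(\tilde{\mathcal{F}})-\mu_q(\tilde f)\bigr)$ is false if $\tilde f$ denotes the strict transform of $f$: for $W=\dd(y^2-x^3)$ one has $\mu_p(\mathcal{F})-\mu_p(f)=2-2=0$, while after one blow-up the unique singular point $q$ of the strict transform satisfies $\mu_q(\tilde{\mathcal{F}})=3$ and $\mu_q(\tilde f)=0$ (the strict transform $v^2-u$ is smooth), so the right-hand side would be $3$. The quantity that actually propagates is $\mu_q(\tilde{\mathcal{F}})-\mu_q(S_q)$, where $S_q$ is the full local union of separatrices of $\tilde{\mathcal{F}}$ at $q$, \emph{including the invariant components of the exceptional divisor}: in the example $S_q=\{u(v^2-u)=0\}$ with $\mu_q(S_q)=3$, and the defect stays $0$. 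With this correction, the blow-up formulas $\mu_p(\mathcal{F})=\nu^2-\nu-1+\sum_q\mu_q(\tilde{\mathcal{F}})$ and $\mu_p(f)=n(n-1)+1-k+\sum_q\mu_q(\tilde f)$ (with $k$ the number of points of $\tilde f\cap E$), together with $\mu_q(S_q)=\mu_q(\tilde f)+2i_q(\tilde f,E)-1$, yield a local excess equal to $\nu^2-\nu-n^2+3n-2=(\nu-n+1)(\nu+n-2)$, which is nonnegative precisely because $\nu\geq n-1$; this is the computation you defer to ``the main obstacle,'' and it is where the theorem is actually proved. Your terminal analysis (defect $0$ at simple singularities, defect $\geq 1$ at saddle-nodes) is correct once $S_q$ replaces $\tilde f$, and the converse direction via \cite[Theorem 2]{C-LN-S} is fine. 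In short: right route, but the one inequality you wrote down explicitly only becomes true after the exceptional divisor is folded into the curve at every infinitely near point.
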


Let $C:\{f(x,y)=0\}$ be a reduced plane curve. In what follows we consider:

\smallskip

$\bullet \;$ $[Fol(f)]$ the set of all 1-forms $W$, defining a nondicritical foliation ${\mathcal F}_{W}$, such that  $f$ divides $W\wedge \dd f$ .

\medskip

$\bullet \;$ $Fol(f)$ the set of nondicrital foliations defined by elements of $[Fol(f)]$ which union of  separatrices is $C:\{f(x,y)=0\}$.
 
\medskip
	
	In this paper we will present new characterizations for generalized curve foliations.

\medskip

\section{Characterization of generalized curve foliations with monomial separatrix}
\label{sect: char monomial}
 In this section we characterize the nondicritical generalized curve foliations with monomial separatrix, by means of the weighted order defined as following
 
 \begin{defi}
\label{worder}
Let $p,q \in \Z^{+}$. The {\bf weighted order} $\upsilon_{p,q}$ for power series and differential forms is:
\[\upsilon_{p,q}\big(\sum_{i,j}a_{i,j}x^iy^j)=\min\{ip+jq\;:\; a_{i,j}\neq 0\},\]
\noindent and
\[\upsilon_{p,q}\big(\sum_{i,j} A_{i,j}x^{i-1}y^j \dd x +\sum_{i,j} B_{i,j}x^iy^{j-1}\dd y) =\min \{ip+jq\;:\;A_{i,j}\neq 0 \,\, \tx{or}\,\,B_{i,j}\neq 0  \}.\]
\end{defi}

\medskip

Let $f\in \C\{x,y\}$ and $n,m$ positive integers not necessarily coprime. For  $f=y^n-x^m$, Frank Loray  obtained a {\bf prenormal form} of an arbitrary element ${\mathcal F}_W\in {\mathcal Fol}(f)$.
\begin{teorema} (\cite[page 157]{Loray})
\label{Loray}
If $f=y^n-x^m$ and  ${\mathcal F}_W\in Fol(f)$ then
\begin{equation}
\label{eq:L}
W=\dd f+ \sum_{{\begin{array}{c}0\leq i \leq m-2\\ 0\leq j \leq n-2 \end{array}}}P_{i,j}(f)x^iy^j(nx\dd y-my\dd x),
\end{equation}
for some  $P_{i,j}(z) \in \C\{z\}$.
\end{teorema}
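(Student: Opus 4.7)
The plan is to decompose $W$ along $\dd f$ and the quasi-homogeneous 1-form $\omega_0 := nx\,\dd y - my\,\dd x$, normalise the $\dd f$-coefficient to $1$, and finally reduce the $\omega_0$-coefficient modulo the Jacobian ideal of $f$, using the Milnor-algebra monomial basis $\{x^i y^j : 0\leq i\leq m-2,\; 0\leq j\leq n-2\}$. The single identity which drives the whole argument is
\[
\dd f \wedge \omega_0 \;=\; nm\,f\,\dd x\wedge \dd y,
\]
a direct consequence of the Euler relation $nx f_x + my f_y = nmf$ coming from quasi-homogeneity of $f = y^n - x^m$ (weights $(n,m)$, weighted degree $nm$).

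\emph{Step 1 (Decomposition).} For $W = A\,\dd x + B\,\dd y\in [Fol(f)]$, Cramer's rule applied to the system expressing $(\dd x,\dd y)$ in terms of $(\dd f,\omega_0)$ (whose determinant is $nmf$) delivers the candidates
\[
h = \frac{nxA + myB}{nmf}, \qquad k = -\,\frac{ny^{n-1}A + mx^{m-1}B}{nmf},
\]
with $W = h\,\dd f + k\,\omega_0$. Holomorphicity of $k$ is literally the separatrix hypothesis $f\mid W\wedge\dd f$. For $h$, I would verify $f\mid nxA + myB$ branch by branch on $\{f = 0\}$: substituting the Puiseux parametrization $(t^n,\zeta t^m)$ of each branch into the hypothesis $ny^{n-1}A + mx^{m-1}B\equiv 0\pmod f$ and multiplying through by an appropriate power of $t$ converts it to $nxA + myB \equiv 0$ along the branch; reducedness of $f$ turns this branch-wise congruence into divisibility by $f$.

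\emph{Step 2 (Normalisation).} The nondicriticality of $\mathcal{F}_W$ together with $\{f=0\}$ being the entire separatrix set forces $h$ to be a unit. Indeed, $\omega_0 = 0$ alone is dicritical (the whole pencil $\{y^n - c x^m = 0\}_{c\in\C}$ is invariant), so if $h(0,0)=0$ the leading jet of $W$ would be controlled by $k\,\omega_0$ and extra separatrices outside $\{f = 0\}$ would survive, contradicting $\mathcal{F}_W\in Fol(f)$. Replacing $W$ by $W/h$ (which does not change the foliation) we may henceforth assume
\[
W \;=\; \dd f + Q\,\omega_0, \qquad Q \in \C\{x,y\}.
\]

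\emph{Step 3 (Milnor-basis reduction).} The Jacobian ideal of $f$ is $J(f) = (x^{m-1},y^{n-1})$ and the Milnor algebra $\C\{x,y\}/J(f)$ admits the monomial basis $\{x^i y^j : 0\leq i\leq m-2,\; 0\leq j\leq n-2\}$. The two identities
\[
x^{m-1}\omega_0 \;=\; y\,\dd f - nf\,\dd y, \qquad y^{n-1}\omega_0 \;=\; x\,\dd f - mf\,\dd x
\]
show that every monomial $x^i y^j\omega_0$ with $i\geq m-1$ or $j\geq n-1$ can be traded for a $\dd f$-term plus an $f$-multiple of a basic 1-form. Expanding $Q$ and carrying out these substitutions, the excess is absorbed into $\dd f$ (with a renormalisation as in Step~2) and into a tail of the form $f\,\theta$ for some $\theta\in\Omega^1_{\C^2,0}$. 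Since $f\,\theta\in[Fol(f)]$, Step~1 applies to it and produces a further contribution to $Q$ that carries an extra factor of $f$. Iterating, the $f$-adic valuation of the remainder strictly increases at each round, so the procedure converges in $\C\{x,y\}$ and yields
\[
Q \;=\; \sum_{\substack{0\leq i\leq m-2\\ 0\leq j\leq n-2}} P_{i,j}(f)\,x^i y^j, \qquad P_{i,j}(z)\in\C\{z\}.
\]

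The main obstacle I expect is Step~3: one has to verify both that the iterative substitution produces coefficients which genuinely lie in $\C\{f\}$ (rather than in the larger ring $\C\{x,y\}$), and that the resulting series converges. Both points rest on the cleanness of the two identities for $x^{m-1}\omega_0$ and $y^{n-1}\omega_0$: each application trades a Milnor-algebra excess for a term of the shape $f\cdot(\text{1-form})$, effectively pushing one extra power of $f$ into the coefficient $P_{i,j}$ at every iteration.
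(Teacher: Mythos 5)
The paper gives no proof of this statement (it is quoted from Loray), so your attempt has to stand on its own. Step 1 is correct and is the natural starting point: from $\dd f\wedge(nx\,\dd y-my\,\dd x)=nmf\,\dd x\wedge \dd y$ one gets $W=h\,\dd f+k\,\omega_0$ with $h,k$ holomorphic, exactly by your Cramer formulas. But observe that this decomposition is \emph{unique}: if $h_1\dd f+k_1\omega_0=h_2\dd f+k_2\omega_0$, wedging with $\omega_0$ gives $(h_1-h_2)\,nmf=0$, hence $h_1=h_2$ and then $k_1=k_2$. This uniqueness is fatal to Step 3. Your identities $x^{m-1}\omega_0=y\,\dd f-nf\,\dd y$ and $y^{n-1}\omega_0=x\,\dd f-mf\,\dd x$ are correct, but when you re-decompose the tail $f\theta$ via Step 1 you recover exactly what you eliminated: $f\,\dd x=\tfrac1m\bigl(x\,\dd f-y^{n-1}\omega_0\bigr)$, so the ``further contribution to $Q$'' coming from $-mf\,\dd x$ is $+y^{n-1}$ again, with \emph{no} extra factor of $f$. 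The $f$-adic valuation of the remainder does not increase; the iteration cycles. It must: by uniqueness, the quotient $k/h$ is an invariant of $W$ in the given coordinates, and no algebraic rewriting in the module generated by $\dd f$ and $\omega_0$ can move it into $\bigoplus_{i\leq m-2,\,j\leq n-2}\C\{f\}x^iy^j$ if it is not already there.

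And it need not be there. Take $n=2$, $m=3$, $f=y^2-x^3$ and
\[
W=\dd f+y\,(2x\,\dd y-3y\,\dd x)=-3(x^2+y^2)\,\dd x+2y(1+x)\,\dd y .
\]
Then $W\wedge \dd f=-6yf\,\dd x\wedge \dd y$, the foliation is nondicritical (a routine check through the three blow-ups resolving the cusp, which end in three simple singularities with eigenvalue ratios $-2,-3,-6$), and $\mu(\mathcal F_W)=i_0\bigl(x^2+y^2,\,y(1+x)\bigr)=2=\mu(f)$, so by Theorem \ref{curvagene} its union of separatrices is exactly the cusp and $\mathcal F_W\in Fol(f)$ (it is even a generalized curve foliation). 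Yet here $h=1$, $k=y$, and $y$ is not of the form $P_{0,0}(f)+P_{1,0}(f)x$ (every monomial of the latter has even $y$-degree). So the conclusion, read as an identity for the given $1$-form in the given coordinates, is simply not reachable by your method: Loray obtains the prenormal form only \emph{after a change of local coordinates}, which is precisely where the excess of $k/h$ modulo $\bigoplus\C\{f\}x^iy^j$ (an element of $\dd f\wedge \dd\,\C\{x,y\}$, by freeness of the Brieskorn module of the quasi-homogeneous singularity) gets absorbed. Your proposal contains no coordinate change at all, so this mechanism is entirely missing. Secondarily, Step 2 is asserted rather than proved: if $k(0,0)=0$ as well, the leading jet of $W$ is not controlled by $k\,\omega_0$, so the dicriticality argument as written does not establish that $h$ is a unit.
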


For $f(x,y)=y^n-x^m$ with $0<n\leq m$ not necessary coprime,  we will give a characterization when the foliation   ${\mathcal F}_W \in Fol(f)$ is generalized curve.\\

The 1-form $W$ in $(\ref{eq:L})$ can be rewritten as

\begin{equation}\label{quasihomogenea}
W=\dd f+(\Delta(x,y)+f(x,y)g(x,y))(nx\dd y-my\dd x),
\end{equation}

where $g\in \C\{x,y\}$ and $\Delta(x,y)=\displaystyle\sum_{{\begin{array}{c}0\leq i \leq m-2\\ 0\leq j \leq n-2 \end{array}}}a_{i,j}x^iy^j.$

\medskip

Let $\gcd(n,m)=d$. Since $m\geq n$ there are integers  $p,q \in \mathbb Z^{+}$ with $mp-nq=d$.

\begin{lema}
\label{singular points} Let  ${\mathcal F}_W\in Fol(f)$ be a foliation defined by $W$ as in (\ref{quasihomogenea}). If $\w(\Delta(x,y))\geq mn-n-m$ then the toric morphism

\begin{equation*}
\label{toric}
\Pi(u,v)=\left (u^pv^{\frac{n}{d}},u^qv^{\frac{m}{d}}\right)
\end{equation*}
 desingularizes  ${\mathcal F}_W$ and the singular points of its strict transform are $(0,0)$ and $(\xi,0)$, where $\xi^d=1$. Moreover the Jacobian matrix at the point $(0,0)$ is 

\begin{equation}
\label{eq:0}
\left (\begin{array}{cc}-\frac{mn}{d} & 0 \\
0 & nq
\end{array} \right),
\end{equation}

and the Jacobian matrix at the point $(\xi,0)$  is:
\begin{equation}
\label{eq:singp}
\left (
\begin{array}{cc}
mn & 0 \\
* & -d(1+\displaystyle \sum_{r,s}a_{r,s}\xi^{p(r+1)+q(s+1)-qn})
\end{array} \right),
\end{equation}
where $r,s$ 	verify 
 $m(r+1)+n(s+1)=nm$. 

\end{lema}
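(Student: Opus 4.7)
The plan is to compute $\Pi^\ast W$ directly in the toric chart $(u,v)$, factor out the largest monomial $u^a v^b$ dividing it, and then read off the singularities and Jacobians of the resulting strict transform $W'$.

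Substituting $x=u^p v^{n/d}$, $y=u^q v^{m/d}$ and using $pm-qn=d$ one obtains
\[
\Pi^\ast(nx\,\dd y - my\,\dd x) \;=\; -d\,u^{p+q-1}v^{(n+m)/d}\,\dd u
\]
and
\[
\Pi^\ast \dd f \;=\; u^{qn-1}v^{mn/d-1}\bigl[\,v(nq-mp\,u^d)\,\dd u + (mn/d)\,u(1-u^d)\,\dd v\,\bigr].
\]
Since $\Pi^\ast(x^iy^j)=u^{pi+qj}v^{(in+jm)/d}$, the hypothesis $\w(\Delta)\geq mn-n-m$ forces $\Pi^\ast\Delta$ to have $v$-order at least $(mn-n-m)/d$ and $\Pi^\ast(fg)$ to have $v$-order at least $mn/d$. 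Consequently $\Pi^\ast\!\bigl[(\Delta+fg)(nx\,\dd y-my\,\dd x)\bigr]$ is divisible by $v^{mn/d}$, one power higher in $v$ than the part coming from $\dd f$.

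Dividing $\Pi^\ast W$ by $u^{qn-1}v^{mn/d-1}$ therefore produces a holomorphic strict transform
\[
W' \;=\; \bigl[\,v(nq-mp\,u^d) + v\,R(u,v)\,\bigr]\,\dd u \;+\; (mn/d)\,u(1-u^d)\,\dd v,
\]
with $R$ holomorphic and the $\dd v$-coefficient unaffected because $nx\,\dd y-my\,\dd x$ has pure $\dd u$-pullback. The exceptional divisor in this chart is $v=0$, and the singularities of $W'$ on it are the common zeros of the two coefficients; as the $\dd v$-coefficient factors as $(mn/d)\,u(1-u^d)$, they are exactly $(0,0)$ and $(\xi,0)$ with $\xi^d=1$. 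Applying $\bigl(\begin{smallmatrix}-Q_u&-Q_v\\P_u&P_v\end{smallmatrix}\bigr)$ to $W'=P\,\dd u+Q\,\dd v$, and noting $Q_v\equiv 0$, I would then compute: at $(0,0)$, the two constraints $p(i+1)+q(j+1)=qn$ and $n(i+1)+m(j+1)=mn$ that would force a $\Delta$-monomial to contribute have the unique solution $(i+1,j+1)=(0,n)$, which lies outside $\supp\Delta$, so no correction appears and one reads off the diagonal matrix in~(\ref{eq:0}) with entries $-mn/d$ and $nq$. At $(\xi,0)$ one has $-Q_u|_{u=\xi}=(mn/d)((d+1)\xi^d-1)=mn$, while $P_v|_{(\xi,0)}=(nq-mp)+\partial_v(vR)|_{(\xi,0)}$; the monomials of $R$ surviving at $v=0$ are precisely those $(r,s)\in\supp\Delta$ on the boundary line $n(r+1)+m(s+1)=mn$, each contributing $-d\,a_{r,s}\,\xi^{p(r+1)+q(s+1)-qn}$, and $nq-mp=-d$ assembles the whole $(2,2)$-entry of~(\ref{eq:singp}). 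Non-vanishing of the diagonal entries together with the negative sign of the eigenvalue product shows the singularities are simple (hence irreducible), so $\Pi$ desingularizes $\mathcal{F}_W$.

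The main obstacle is the simultaneous combinatorial bookkeeping of $v$-orders: the hypothesis $\w(\Delta)\geq mn-n-m$ has to be invoked twice, first to confirm that $v^{mn/d-1}$ is the exact $v$-power factored out of $\Pi^\ast W$, and second to pin down precisely which monomials of $\supp\Delta$ lie on the boundary line $n(r+1)+m(s+1)=mn$ and therefore contribute to the linear part at $(\xi,0)$. Once this is disentangled, the remainder is routine differentiation using $pm-qn=d$ and $\xi^d=1$.
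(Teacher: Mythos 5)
Your overall route is the same as the paper's: pull $W$ back through $\Pi$, factor out $u^{qn-1}v^{\frac{mn}{d}-1}$, and read the singular locus and the Jacobian matrices off the resulting $W'$. Some of your observations are in fact cleaner than the paper's write-up: the remark that $nx\,\dd y-my\,\dd x$ pulls back to a pure $\dd u$-form, so that the $\dd v$-coefficient of $W'$ is exactly $\frac{mn}{d}u(1-u^d)$; and the determinant argument (the system $p(i+1)+q(j+1)=qn$, $n(i+1)+m(j+1)=mn$ has determinant $pm-qn=d\neq0$ and unique solution $(i+1,j+1)=(0,n)$, which lies outside $\supp\Delta$) showing that no $\Delta$-monomial contaminates the linear part at the origin.

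There is, however, a gap at the step ``dividing $\Pi^{*}W$ by $u^{qn-1}v^{\frac{mn}{d}-1}$ produces a holomorphic strict transform.'' You justify divisibility by $v^{\frac{mn}{d}-1}$ from $\w(\Delta)\geq mn-n-m$, but you never check divisibility by $u^{qn-1}$: each $\Delta$-monomial contributes $u$-exponent $p(i+1)+q(j+1)-1$ to $\Pi^{*}W$, and one needs $p(i+1)+q(j+1)\geq qn$ (the paper proves the strict inequality, i.e.\ $\ord_u(S)>0$). This does not follow from the $v$-order bound alone; the paper's argument uses both the hypothesis $n(i+1)+m(j+1)\geq mn$ and the constraint $j\leq n-2$ coming from $\supp\Delta$:
\[
p(i+1)+q(j+1)=\frac{p\big[n(i+1)+m(j+1)\big]-d(j+1)}{n}\geq\frac{pmn-d(j+1)}{n}>\frac{pmn-dn}{n}=pm-d=qn.
\]
Without this, the monomial you factor out need not divide $\Pi^{*}W$, $W'$ need not be holomorphic, and the identification of the singular points and of the entry $nq$ at the origin is unsupported; your two-equation argument only excludes a term with \emph{both} exponents zero, which is weaker. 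A secondary point: your closing claim that the singularities are simple because ``the eigenvalue product is negative'' fails at $(\xi,0)$, where the entry $-d\big(1+\sum_{r,s}a_{r,s}\xi^{p(r+1)+q(s+1)-qn}\big)$ may vanish (giving a saddle-node) or make the eigenvalue ratio a positive rational; this is precisely the case distinction on which Theorem \ref{monomial} turns, so at $(\xi,0)$ the lemma can only assert the form of the Jacobian, not simplicity.
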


\begin{proof}
Notice that $\Pi^{*}W=u^{qn-1}v^{\frac{mm}{d}-1}\cdot W'$, where

\begin{eqnarray}
\label{str}
W'&=&[(nq-mpu^{d})+(nq-mp)S
+h(u,v)(nqu^{p+q}v^{\frac{n+m}{d}}-pmu^{d+p+q}v^{\frac{n+m}{d}})]vdu \nonumber \\
&+& \left[\frac{mn}{d}(1-u^d)u+\frac{mnh}{d}(u^{p+q+1}v^{\frac{m+n}{d}}-
u^{p+q+d+2}v^{\frac{m+n}{d}} ) \right] dv,
\end{eqnarray}

 with $S= \sum a_{i,j} u^{p(i+1)+q(j+1)-qn}v^{\frac{m(j+1)+n(i+1)-mn}{d}}$ and $h(u,v)\in \mathbb C\{u,v\}$. By hypothesis, if $(i,j)\in \supp \Delta$ then $ni+jm\geq mn-m-n$, and we have $\tx{ord}_{v}(S)\geq 0$. On the other hand we claim that   $\ord_{u}(S) > 0$, that is $p(i+1)+q(j+1)> qn$. In effect, from  $ni+jm\geq mn-n-m$\, and $mp-nq=d$ we get
  $p(i+1)+q(j+1) = \frac{ pn(i+1)+pm(j+1)}{n}-\frac{d(j+1)}{n} \geq \frac{pmn-d(j+1)}{n}$.  Since $0\leq j\leq n-2$ we have $\frac{pmn-d(j+1)}{n}>\frac{pnm-dn}{n}$, so $p(i+1)+q(j+1)>pm-d=nq$.\\ 
  
Therefore the singular points of ${\mathcal F}_{W'}$ are $(0,0)$ and $(\xi,0)$, with $\xi^d=1$. The point $(0,0)$ corresponds to the intersection of the exceptional divisors, the other points correspond to the irreducible components of $y^n-x^m$. 
 
Rewrite \eqref{str} as $W'=A(u,v)\dd u+B(u,v)\dd v$. We get $B_{u}(0,0)=\frac{mn}{d}$, $B_{v}(0,0)=A_{u}(0,0)=0$ and $A_{v}(0,0)=nq$, so the Jacobian matrix at $(0,0)$ is \eqref{eq:0} and the origin is not a saddle-node. On the other hand $B_{u}(\xi,0)=\frac{mn}{d}$, $B_{v}(\xi,0)=A_{u}(\xi,0)=0$ and $A_{v}(\xi,0)=1+\displaystyle \sum_{m(r+1)+n(s+1)=mn}a_{r,s}\xi^{p(r+1)+q(s+1)-qn}$. Hence the Jacobian
matrix at $(\xi,0)$ is \eqref{eq:singp}.
\end{proof}

Using Lemma \ref{singular points} we get:

\begin{teorema}\label{monomial}
Let ${\mathcal F}_W\in Fol(f)$ be a foliation defined by $W$ as in (\ref{quasihomogenea}). Then ${\mathcal F}_W$ is a generalized curve foliation if and only if $\w(\Delta(x,y))\geq mn-n-m$, and $(1+\sum_{ m(r+1)+n(s+1)=mn}a_{r,s}\xi^{p(r+1)+q(s+1)-qn}) \notin  \Q^{-}\cup\{0\}$, where $m(r+1)+n(s+1)=nm$ and $\xi^d=1$.
\end{teorema}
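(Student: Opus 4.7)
The plan is to reduce the verification to the Seidenberg classification at the singular points of the strict transform $\mathcal{F}_{W'}$, using Lemma \ref{singular points} as the main computational input. Lemma \ref{singular points} already supplies the singular points of the strict transform under the toric morphism $\Pi$ (namely $(0,0)$ and the points $(\xi,0)$ with $\xi^d=1$) together with the Jacobians of the linear part of $W'$ there, so the task becomes checking when these Jacobians correspond to simple singularities; the weighted-order inequality controls when Lemma \ref{singular points} applies at all.

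For the sufficiency, assume both hypotheses. The Jacobian at $(0,0)$, displayed in \eqref{eq:0}, has eigenvalues $-mn/d$ and $nq$, both nonzero with ratio $-m/(dq)\in\mathbb{Q}^-$, so this is a simple (non-degenerate, non-saddle-node) singularity. At each $(\xi,0)$ with $\xi^d=1$, the Jacobian \eqref{eq:singp} has eigenvalues $mn$ and $-d\Lambda_\xi$, where $\Lambda_\xi:=1+\sum_{m(r+1)+n(s+1)=mn}a_{r,s}\xi^{p(r+1)+q(s+1)-qn}$; the assumption $\Lambda_\xi\notin\mathbb{Q}^-\cup\{0\}$ makes both eigenvalues nonzero (so not a saddle-node) and forces the ratio $-mn/(d\Lambda_\xi)$ outside $\mathbb{Q}^+$ (so the point is Seidenberg-irreducible), hence each such singularity is also simple. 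Since every singularity after $\Pi$ is simple, $\Pi$ is the complete desingularization of $\mathcal{F}_W$ and it is free of saddle-nodes, so $\mathcal{F}_W$ is a generalized curve foliation.

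For the necessity, I would first derive $\w(\Delta)\ge mn-n-m$ from Proposition \ref{Newton}: $\NP(f)$ is the segment from $(m,0)$ to $(0,n)$, and each monomial $a_{i,j}x^iy^j$ of $\Delta$ contributes the lattice point $(i+1,j+1)$ to $\supp W$ with nonzero coefficient in both $xA$ and $yB$ (so no cancellation with the monomials of $\dd f$ or of $fg(nx\,\dd y-my\,\dd x)$ can remove it); the equality $\NP(W)=\NP(f)$ then forces $n(i+1)+m(j+1)\ge mn$, giving the inequality. Lemma \ref{singular points} now applies and supplies the Jacobians above. By \cite[Theorem 2]{C-LN-S} (quoted in the Introduction), the reduction of singularities of $\mathcal{F}_W$ coincides with the reduction of $f=y^n-x^m$, which is exactly the toric morphism $\Pi$, so the points $(\xi,0)$ are already terminal Seidenberg-irreducible points of $\mathcal{F}_{W'}$. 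The generalized-curve hypothesis forbids saddle-nodes, so each must in fact be simple, and this simplicity translates through the eigenvalue computation into $\Lambda_\xi\notin\mathbb{Q}^-\cup\{0\}$.

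The main obstacle is confirming that $\Pi$ really is the full Seidenberg desingularization of $\mathcal{F}_W$ in both directions. In the sufficiency direction this is automatic once each Jacobian listed by Lemma \ref{singular points} is checked to be simple, so no further blow-ups can occur. In the necessity direction it rests on the Camacho--Lins Neto--Sad theorem, since $\Pi$ is clearly the minimal toric resolution of the separatrix $y^n=x^m$; without that result a Jacobian at some $(\xi,0)$ with $\Lambda_\xi\in\mathbb{Q}^-$ could in principle be resolved by further blow-ups without ever introducing a saddle-node, and the necessity would be considerably more delicate.
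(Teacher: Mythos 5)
Your proof is correct and follows essentially the same route as the paper's: Proposition \ref{Newton} yields the weighted-order inequality and the Jacobians of Lemma \ref{singular points} yield the eigenvalue condition, with sufficiency obtained by checking that all singularities of the strict transform are simple. In fact you are slightly more careful than the paper in the necessity direction, where you invoke the Camacho--Lins Neto--Sad equidesingularization theorem to rule out $\Lambda_\xi\in\Q^-$ (the paper only argues that $(\xi,0)$ is not a saddle-node, which by itself excludes only $\Lambda_\xi=0$); this extra step is genuinely needed and is a welcome addition.
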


\begin{proof}

Suppose  that the foliation  ${\mathcal F}_W$ is generalized curve. Since its union of separatrices is $\{f(x,y)=0\}$ and  by Proposition \ref{Newton}, we get the equality $\NP(W)=\NP(df)$  and we have $\w(W)\geq mn$. In addition,  using the representation given in (\ref{quasihomogenea})  we have $\w(y\sum a_{i,j}x^iy^j\dd x)\geq mn$ \,\,and \,\, $\w(x\sum a_{i,j}x^iy^j\dd y)\geq mn$; so    $\w(\Delta(x,y))+n+m=n(i+1)+m(j+1)\geq mn$. Moreover the points $(\xi,0)$ are not saddle-nodes, hence by \eqref{eq:singp} we get 
$(1+\sum_{r,s}a_{r,s}\xi^{p(r+1)+q(s+1)-qn}) \notin  \Q^{-}\cup\{0\}$.

On the other hand,  the hypothesis  $\w(\Delta(x,y))\geq mn-n-m$  allows us to apply Lemma \ref{singular points}. Since $(1+\sum_{r,s}a_{r,s}\xi^{p(r+1)+q(s+1)-qn}) \notin \Q^{-}\cup\{0\}$ we conclude that ${\mathcal F}_W\in Fol(f)$ is a generalized curve foliation.
\end{proof}

\begin{ejemplo}
\label{ej:cg}
Suppose that	$W=\dd (y^3-x^6) +axy(3x\dd y-6y)\dd x$, where $a\in \C$. In this case 
	$p=q=1$, and the toric morphism is 
	 $x=uv$, $y=uv^2$. The total transform of $W$ is 
\[\Pi^{*} W:\,\,\,u^2v^5[((3-6u^3)-3au)v\dd u +6(1-u^3)u\dd v]=u^2v^5(A\dd x+B\dd y),\]
where $A=((3-6u^3)-3au)v$ and $B=6(1-u^3)u$. Then the singularities of the strict transform of $W$ are $(0,0),(\xi,0)$ with $\xi^3=1$.
	Now $6(r+1)+3(s+1)=18$ if and only if $(r,s)=(1,1)$. Hence, if  $(1+a\xi)\in \Q^{-}$, for some $ \xi\,\tx{with}\,\,\xi^3=1$, then the foliation ${\mathcal F}_{W}$ is   not generalized curve.
\end{ejemplo}

Mattei and Salem \cite{Mattei-Salem} consider a  family of foliations, more general than the generalized curved foliations, called foliations of the \emph{second type}, where  saddle nodes are admitted in the reduction process, provided that 
they lie in the regular part of the divisor, with their \emph{weak separatrices} (the separatrices associated with the zero eingenvalue) transversal to the divisor.\\

In the next corollary we give the condition in order that a foliation of second type becomes a generalized curve foliation following \cite[Theorem 1.2 (b)]{FS-GB-SM}:

\begin{coro}
\label{monomial second type}
Let ${\mathcal F}_W\in Fol(f)$ be a foliation defined by $W$ as in (\ref{quasihomogenea}). Suppose that ${\mathcal F}_W\in Fol(f)$ is of the second type. Then  ${\mathcal F}_W\in Fol(f)$ is a generalized curve foliation if and only if
\[(1+\sum_{ m(r+1)+n(s+1)=mn}a_{r,s}\xi^{p(r+1)+q(s+1)-qn}) \notin  \Q^{-}\cup\{0\}.\]
\end{coro}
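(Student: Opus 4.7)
The strategy is to reduce the corollary to Theorem \ref{monomial} by showing that the hypothesis of being of second type automatically supplies the weighted order inequality $\w(\Delta(x,y))\geq mn-n-m$ appearing there. The forward direction is then immediate: if ${\mathcal F}_W$ is generalized curve, Theorem \ref{monomial} already yields the required eigenvalue condition.

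For the converse, I would first invoke \cite[Theorem 1.2(b)]{FS-GB-SM} applied to ${\mathcal F}_W\in Fol(f)$: being of second type forces the equality of Newton polygons $\NP(W)=\NP(f)$. Since $f=y^n-x^m$, the polygon $\NP(f)$ is the segment of the line $nu+mv=nm$ between $(m,0)$ and $(0,n)$, so every point of $\supp W$ satisfies $ni+mj\geq nm$, i.e.\ $\w(W)\geq mn$. Because $\w(\dd f)=mn$ as well, the same bound survives passage to $W-\dd f=(\Delta+fg)(nx\,\dd y-my\,\dd x)$.

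The final step is a short computation in the decomposition \eqref{quasihomogenea}. Each monomial $a_{i,j}x^iy^j$ of $\Delta+fg$ produces a support point $(i+1,j+1)$ in $\supp W$ with weighted order $n(i+1)+m(j+1)$, so that
\[
\w\bigl((\Delta+fg)(nx\,\dd y-my\,\dd x)\bigr)=\w(\Delta+fg)+n+m.
\]
Hence $\w(\Delta+fg)\geq mn-n-m$, and since $\w(fg)\geq \w(f)=mn$, the subadditivity of $\w$ on sums yields $\w(\Delta)\geq mn-n-m$. Combined with the eigenvalue hypothesis, Theorem \ref{monomial} now concludes that ${\mathcal F}_W$ is a generalized curve foliation.

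The only delicate point in the plan is the input step: correctly citing the precise form of \cite[Theorem 1.2(b)]{FS-GB-SM} that characterizes second type foliations via the Newton polygon equality $\NP(W)=\NP(f)$. Once this is in hand, the rest of the argument is purely formal manipulation of weighted orders, and no further analysis of the desingularization (as in Lemma \ref{singular points}) is required beyond what is already carried out in Theorem \ref{monomial}.
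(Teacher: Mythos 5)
Your proposal is correct and follows exactly the route the paper intends: its entire proof of this corollary is the one-line citation of \cite[Theorem 1.2]{FS-GB-SM} together with Theorem \ref{monomial}, and your argument simply supplies the omitted details (Newton polygon equality from the second-type hypothesis, hence $\w(W)\geq mn$, hence $\w(\Delta)\geq mn-n-m$, then Theorem \ref{monomial}). The weighted-order bookkeeping in your converse step is accurate, so nothing further is needed.
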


\begin{proof}
It is a consequence of  \cite[Theorem 1.2]{FS-GB-SM} and Theorem \ref{monomial}.
\end{proof}

\begin{nota}
From Theorem \ref{monomial}  we can deduce  Proposition 5.7 of \cite{FS-GB-SM}: let  $n,m$ be two positive integers which are not coprime. Consider $f(x,y)=y^{n}-x^{m}$ and ${\mathcal F}_W\in Fol(f)$, where $W=\dd f +\Delta'(x,y)(nx\dd y -my \dd x)$ with $\Delta'(x,y)\in \mathbb C\{x,y\}$. 
If we suppose that  $i_{0}(\Delta',f)>mn-m-n$ then from the proof of Lemma \ref{singular points} we get that  $\tx{ord}_{v}(S)> 0$. Hence $-d(1+\displaystyle \sum_{r,s}a_{r,s}\xi^{p(r+1)+q(s+1)-qn})=-d$. In particular, the foliation ${\mathcal F}_W$ is generalized curve.\\

In Example \ref{ej:cg} we have a family of nondicritical generalized curve foliations ${\mathcal F}_W$ with  $i_{0}(\Delta',f)=mn-m-n$ when $(1+a\xi)\not\in \Q^{-}$.\\

\end{nota}

\section{Weierstrass form of a 1-form}
\label{sect: Weierstrass}

In this section we introduce, using the Weierstrass division of power series, a  distinguished  equation for a given  1-form, with respect to any Weierstrass polynomial, called {\it Weierstrass form}. The Weierstrass form is very useful for computations and it can be considered as   a generalization, { to any 1-form } of the prenormal form given by Loray  for foliations with monomial separatrix. The Weierstrass form is well-defined for any $1$-form. Nevertheless, in this paper we are using the Weierstrass forms associated with  $1$-form $W$ defining a nondicritical foliation.

\medskip

Let $f=\sum_{i=0}^{n}a_i(x)y^{n-i}\in\mathbb{C}\{x\}[y]$ be a reducible polynomial (not necessarily 
irreducible) of degree $\deg_y(f)=n$, $a_0(0)\neq 0$ and $W\in \Omega^{1}_{\mathbb{C}^2,0}$ be a 1-form.
Here we do not suppose that $C:\{f(x,y)=0\}$ is the union of separatrices of the foliation $\mathcal{F}_{W}$. In that follows we will obtain an equation of $W$ in function of $f$ and $\dd f$.

\begin{lema}
\label{le:preW}
If $W\in \Omega^1_{\mathbb{C}^2,0}$ and 
$f(x,y)=\sum_{i=0}^{n}a_i(x)y^{n-i}\in \C\{x\}[y]$ with $\deg_y(f)=n>1$ and $a_0(0)\neq 0$, then there exist unique $h,p\in\mathbb{C}\{x,y\}$ and $A,B\in\mathbb{C}\{x\}[y]$ with $B=0$ or $\deg_y(B)<n-1$ and $A=0$ or $\deg_y(A)<n$ such that
\begin{equation}
\label{eq:W}
W=h\dd f+pf\dd x+A\dd x+B\dd y.
\end{equation}
\end{lema}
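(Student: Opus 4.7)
The plan is to obtain the decomposition by applying the Weierstrass division theorem twice, once with divisor $f_y$ and once with divisor $f$, and to get uniqueness from the uniqueness part of that theorem.

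The hypothesis $a_0(0)\neq 0$ is exactly what makes both $f$ and $f_y$ admissible Weierstrass divisors in $y$: $f$ is a polynomial of $y$-degree $n$ with invertible leading coefficient $a_0(x)$, and $f_y=\sum_{i=0}^{n-1}(n-i)a_i(x)y^{n-i-1}$ is a polynomial of $y$-degree $n-1$ with leading coefficient $na_0(x)$, again a unit in $\mathbb{C}\{x\}$. Writing $W=W_1\,\dd x+W_2\,\dd y$ and substituting $\dd f=f_x\,\dd x+f_y\,\dd y$ into (\ref{eq:W}), the equation to be solved is equivalent to the pair
\begin{align*}
W_1 &= h f_x + p f + A, \\
W_2 &= h f_y + B,
\end{align*}
with the prescribed bounds $\deg_y(B)<n-1$ and $\deg_y(A)<n$ (or the respective remainder equal to $0$).

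For existence, I would first apply Weierstrass division of $W_2\in\mathbb{C}\{x,y\}$ by $f_y$: this yields unique $h\in\mathbb{C}\{x,y\}$ and $B\in\mathbb{C}\{x\}[y]$ with $B=0$ or $\deg_y(B)<n-1$ such that $W_2=hf_y+B$. With that $h$ fixed, I would then Weierstrass-divide $W_1-hf_x\in\mathbb{C}\{x,y\}$ by $f$, obtaining unique $p\in\mathbb{C}\{x,y\}$ and $A\in\mathbb{C}\{x\}[y]$ with $A=0$ or $\deg_y(A)<n$ such that $W_1-hf_x=pf+A$. Reassembling gives precisely (\ref{eq:W}).

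For uniqueness, suppose $(h,p,A,B)$ and $(h',p',A',B')$ both satisfy (\ref{eq:W}). Setting $\tilde h=h-h'$, and analogously for the others, and comparing the $\dd x$ and $\dd y$ coefficients of the difference gives
\[
0=\tilde h f_y+\tilde B,\qquad 0=\tilde h f_x+\tilde p f+\tilde A,
\]
with $\tilde A,\tilde B$ respecting the same degree bounds. The first identity exhibits $0$ as the Weierstrass division of itself by $f_y$ with remainder $\tilde B$; uniqueness forces $\tilde h=0$ and $\tilde B=0$. The second then collapses to $0=\tilde p f+\tilde A$, and uniqueness of division by $f$ yields $\tilde p=0$ and $\tilde A=0$.

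I do not foresee a real obstacle: the argument is essentially bookkeeping around two applications of Weierstrass division. The only point worth watching is the order of the two divisions — one must extract $h$ from the $\dd y$-component first, because $h$ also appears (via $f_x$) in the $\dd x$-component, whereas $p,A,B$ each occur in only one of the two components.
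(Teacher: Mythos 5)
Your proposal is correct and follows essentially the same route as the paper: Weierstrass division of the $\dd y$-coefficient by $f_y$ to produce $h$ and $B$, then division of the adjusted $\dd x$-coefficient $W_1-hf_x$ by $f$ to produce $p$ and $A$, with uniqueness read off from the degree bounds exactly as in the paper's argument.
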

\begin{proof}
Put $W=R(x,y)\dd x+S(x,y)\dd y\in\Omega^1_{\mathbb{C}^2,0}$. After the Weierstrass division of $S$ by $f_y$, there exist $h\in \C\{x,y\}$ and $B\in \C\{x\}[y]$ such that $S=hf_y+B$, where  $B=0$ or $\deg_yB<\deg_yf_y=\deg_yf-1=n-1$ and
$W=R\dd x+hf_y\dd y+B\dd y.$ Since $\dd f=f_x\dd x+f_y\dd y$ then
$W=(R-hf_x)\dd x+h\dd f+B\dd y.$
Now, by the Weierstrass division of  $R-hf_x$ by $f$ there exist $p\in \C\{x,y\}$ and $A\in \C\{x\}[y]$ such that
$R-hf_x=pf+A$ with $A=0$ or $\deg_yA<\deg_yf=n$ and 

\[ W=h\dd f+pf\dd x+A\dd x+B\dd y.\]

Suppose that $h_1\dd f+p_1f \dd x+A_1\dd x+B_1\dd y=h_2 \dd f+p_2f\dd x+A_2\dd x+B_2\dd y$ with $h_i,p_i\in\mathbb{C}\{x,y\}$ and $A_i,B_i\in\mathbb{C}\{x\}[y]$ with $\deg_y(B_i)<n-1$ and $\deg_y(A_i)<n$ for $i=1,2$. Then $(h_1-h_2)f_y=B_2-B_1$ and $(h_1-h_2)f_x+(p_1-p_2)f=A_2-A_1$.

As $\deg_y(B_i)<n-1=\deg_y(f_y)$, we get $h_1=h_2$ and $B_1=B_2$. Similarly, $\deg_y(A_i)<n=\deg_y(f)$ implies that $p_1=p_2$ and $A_1=A_2$.
\end{proof}

\begin{defi}
\label{Wform}
Let $W\in\Omega^1_{\mathbb{C}^2,0}$ and $f=\sum_{i=0}^{n}a_i(x)y^{n-i}\in\mathbb{C}\{x\}[y]$ with $\deg_y(f)=n>1$ and $a_0(0)\neq 0$. The {\bf Weierstrass form} of $W$ with respect to $f$ is $h\dd f+pf\dd x+A\dd x+B\dd y$, where 
$h,p\in\mathbb{C}\{x,y\}$ and $A,B\in\mathbb{C}\{x\}[y]$ are unique and satisfying $B=0$ or $\deg_y(B)<n-1$ and $A=0$ or $\deg_y(A)<n$.
\end{defi}

We can rewrite \eqref{eq:W} as 
\begin{equation}
\label{eq:W1}
W=h\dd f+pf\dd x +\omega,
\end{equation}

where $\omega=\sum A_{i,j}x^{i-1}y^j \dd x+ \sum B_{i,j}x^{i}y^{j-1} \dd y=A \dd x+B\dd y$  for some $A_{i,j},B_{i,j}\in \C$.
Since $B=0$ or $\deg_{y} B<n-1$ then $B_{0,n}=0$.\\

Observe that, in \eqref{eq:W1},  $h$, $p$ and $w$ depend on $f$. Moreover if $W=R(x,y)\dd x+S(x,y)\dd y\in\Omega^1_{\mathbb{C}^2,0}$, where $R(x,y), S(x,y)\in \mathbb{C}\{x\}[y]$ and $f\in\mathbb{C}\{x\}[y]$ with  $\deg_{y}(f)>\max\{\deg_{y}(R),\deg_{y}(S)+1\}$ then, in \eqref{eq:W1}, $h=p=0$ and the Weierstrass form of $W$ is $w$.

\medskip

\begin{prop}  
\label{Fol(f)}
Let ${W}\in [Fol(f)]$ as \eqref{eq:W1}, where $h$ is a unit of $\C\{x,y\}$ and $f(x,y)=\sum_{i=0}^{n}a_i(x)y^{n-i}\in \C\{x\}[y]$ with $n=\deg_{y}(f)=\ord f$ and $a_0(0)\neq 0$.
Then ${\mathcal F}_{W}\in Fol(f)$,  that is $C$ is the union of separatrices of ${\mathcal F}_{W}$. 
\end{prop}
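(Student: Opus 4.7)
The plan is to determine $\mult(W)$ exactly and then invoke Theorem~\ref{comparando multiplicidades} in both directions. Let $F\in\C\{x,y\}$ be a reduced equation for the union of all separatrices of $\mathcal{F}_W$; the goal is to prove $\{F=0\}=\{f=0\}=C$.

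For the lower bound on $\mult(W)$, note that since $W\in[Fol(f)]$ and $f$ is reduced, each irreducible factor of $f$ is a separatrix of $\mathcal{F}_W$, hence $f\mid F$ in $\C\{x,y\}$ and in particular $\ord F\geq\ord f=n$. Theorem~\ref{comparando multiplicidades} then gives $\mult(W)\geq\ord F-1\geq n-1$.

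For the matching upper bound I would read the $\dd y$-coefficient of $W$ directly off the Weierstrass form \eqref{eq:W1}. Writing $W=R\,\dd x+S\,\dd y$, one has $S=h f_y+B$. The hypothesis $\ord f=n$ forces $a_i(0)=0$ for every $i\geq 1$ (otherwise the monomial $y^{n-i}$ of degree $n-i<n$ would appear in $f$), so $f_y=n\,a_0(x)\,y^{n-1}+(\text{terms of }y\text{-degree}<n-1)$, and a direct computation shows that the monomial $y^{n-1}$ occurs in $hf_y$ with coefficient $h(0,0)\cdot n\cdot a_0(0)\neq 0$. The Weierstrass normalization $\deg_y B<n-1$ rules out any $y^{n-1}$ contribution from $B$, so this monomial survives in $S$. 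Hence $\ord S\leq n-1$ and therefore $\mult(W)\leq n-1$.

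Combining the two bounds yields $\mult(W)=n-1$ and $\ord F=n=\ord f$. Writing $F=f\cdot g$ in $\C\{x,y\}$ and using additivity of orders, $\ord g=0$, so $g$ is a unit; therefore $\{F=0\}=\{f=0\}=C$, which is precisely $\mathcal{F}_W\in Fol(f)$. The main obstacle is the sharpness of the upper bound: the Weierstrass condition $\deg_y B<n-1$ is \emph{exactly} what prevents cancellation with the top $y^{n-1}$ monomial of $hf_y$, and both hypotheses $h(0,0)\neq 0$ and $a_0(0)\neq 0$ are essential for that monomial to have nonzero coefficient in $S$.
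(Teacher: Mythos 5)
Your proof is correct and follows essentially the same route as the paper: you bound $\mult(W)\leq n-1$ by observing that the monomial $y^{n-1}$ of $hf_y$ cannot be cancelled by $B$ (the paper phrases this as $B_{0,n}=0$), and then you invoke Theorem~\ref{comparando multiplicidades} to exclude any further separatrix. The only cosmetic difference is that the paper concludes by contradiction with a hypothetical extra separatrix $g$, whereas you argue directly that the cofactor of $f$ in the full separatrix equation must be a unit.
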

\begin{proof}
Given that $\deg_{y}(f)=\ord f$,  the multiplicity of $W$ equals the multiplicity of $h\dd f+\omega$. Since $h$ is a unit we get $ \mult (W)=\min\{\ord(f_{x}+A),\ord(f_{y}+B)\}\leq \ord(f_{y}+B)$.
But $B_{0,n}=0$ and $\ord f=\deg_{y}(f)$, so $\ord(f_{y}+B) \leq n-1$. Hence 
\begin{equation}
\label{mult}
\mult (W)\leq n-1.
\end{equation}

Suppose that ${\mathcal F}_{W}$  has other separatrix $g(x,y)=0$.  By Theorem \ref{comparando multiplicidades} we have $\mult (W)\geq \ord(fg)-1>n-1,$ which is a contradiction after the inequality \eqref{mult}. 

\end{proof}

\medskip

In Section \ref{Char} we will see how the Weierstrass forms allow us to give new characterizations of generalized curve foliations. We can also characterize the second type foliations using the Weierstrass forms. To do this, we first remember
the characterization  given by Mattei and Salem:

\begin{teorema}(\cite[Th\'eor\`eme 3.1.9]{Mattei-Salem})
\label{secondtype}
Let $W=A(x,y)\dd x+B(x,y)\dd y$  be a 1-form. Suppose that  ${\mathcal F}_W$ is a non dicritical foliation and $C:\{f(x,y)=0\}$  is its union of separatrices. Then
 ${\mathcal F}_W$ is a second type foliation  if and only if $\mult(W)= \mult(\dd f)$.
\end{teorema}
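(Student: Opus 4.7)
The inequality $\mult(W)\ge\mult(\dd f)=\ord(f)-1$ is immediate from Theorem \ref{comparando multiplicidades}, so the content of the statement is the characterization of equality. I would proceed by induction on the length of the desingularization process $\Pi$ of $\mathcal{F}_W$, analyzing how both multiplicities transform under a single blow-up.

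First, for one blow-up $\pi\colon\widetilde{\C}^2\to\C^2$ at the origin, since $\mathcal{F}_W$ is non-dicritical one has $\pi^{*}W=E^{m}\,W'$ with $m=\mult(W)$ and $E$ a separatrix of the strict transform $\mathcal{F}_{W'}$; analogously $\pi^{*}f=E^{n}\,\widetilde f$ with $n=\ord(f)$. Thus, at each singular point $p$ of $\mathcal{F}_{W'}$ on $E$, the local equation of the union of separatrices of $\mathcal{F}_{W'}$ at $p$ is $f_p=t\cdot\widetilde f_p$, where $t$ is a local equation of $E$ and $\widetilde f_p$ that of the strict transform of $f$ at $p$. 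The polynomial $h(v)=A_m(1,v)+vB_m(1,v)$, formed from the initial forms of $A$ and $B$, is nonzero by the non-dicritical hypothesis and locates these singularities on the chart $x=u,\,y=uv$; the other chart is treated symmetrically.

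Second, I would verify the transformation identity
\begin{equation*}
\mult(W)-(\ord(f)-1)\;=\;\sum_{p\in\mathrm{Sing}(\mathcal{F}_{W'})\cap E}\bigl[\mult_p(W')-(\ord_p(f_p)-1)\bigr]
\end{equation*}
by direct coordinate computation, comparing the orders of $h$ and of the initial form $f_n$ at each root. Each summand on the right is non-negative by Theorem \ref{comparando multiplicidades} applied at $p$, and by the induction hypothesis (the desingularization length at each $p$ is strictly smaller) it vanishes iff $\mathcal{F}_{W'}$ is of second type at $p$. Since the second-type property is defined through the full reduction process, $\mathcal{F}_W$ is of second type iff $\mathcal{F}_{W'}$ is of second type at every such $p$, which combined with the formula yields $\mult(W)=\ord(f)-1 \Leftrightarrow \mathcal{F}_W$ is of second type.

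The base case (already reduced $\mathcal{F}_W$) is a direct check: simple non saddle-node singularities have $f=xy$ in suitable coordinates with $\mult(W)=1=\ord(f)-1$; for saddle-nodes, equality holds precisely when the weak separatrix is convergent and transverse to the divisor, which is exactly the second-type criterion. The main obstacle will be the verification of the transformation identity itself: it requires bookkeeping of initial forms across both charts, handling corner points where $E$ meets previously created exceptional divisors (in later steps of the induction), and treating branches of $f$ whose strict transforms are tangent to $E$. Tangent saddle-nodes are exactly the local mechanism by which a summand becomes strictly positive and thereby breaks equality, which is the geometric content of the theorem.
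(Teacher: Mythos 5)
First, a point of reference: the paper does not prove this statement at all --- it is imported with a citation to Mattei--Salem [Th\'eor\`eme 3.1.9], so there is no internal proof to compare against. Your overall strategy (induct on the reduction process and telescope the defect $\mult(W)-(\ord(f)-1)$ through a single non-dicritical blow-up, localizing the singularities of $W'$ at the roots of $h(v)=A_m(1,v)+vB_m(1,v)$) is indeed the spirit of the Mattei--Salem argument, and the setup $\pi^{*}W=E^{m}W'$, $\pi^{*}f=E^{n}\tilde f$ is correct in the non-dicritical case.

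However, the transformation identity you place at the heart of the induction is false, and a small example breaks it. Take $W=(y^{2}+xy)\,\dd x-x^{2}\,\dd y$: it is non-dicritical (tangent cone $xy^{2}$), its union of separatrices is $f=xy$, and the global defect is $\mult(W)-(\ord(f)-1)=2-1=1$. One blow-up reduces it: in the chart $(x,y)=(u,uv)$ one gets $\pi^{*}W=u^{2}(v^{2}\dd u-u\dd v)$, a saddle-node at the origin whose weak separatrix is the divisor $E=\{u=0\}$ and whose strong separatrix $\{v=0\}$ is the strict transform of $\{y=0\}$; the other chart contains a simple singularity. At both points $\mult_{p}(W')=1$ and the local union of separatrices of $W'$ (divisor included) is a normal crossing of two branches, so each local term $\mult_{p}(W')-(\ord_{p}(f_{p})-1)$ vanishes, while the left-hand side equals $1$. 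The failure is structural, not a bookkeeping slip: your local defect is computed from the germ of $W'$ at $p$ together with its own separatrices, so it cannot distinguish a saddle-node whose weak separatrix lies inside the divisor from one whose weak separatrix is transverse to it --- and that is precisely the dichotomy defining second type. The quantity that does telescope under a non-dicritical blow-up is the tangency order of $W'$ with $E$ at $p$ (the multiplicity of the corresponding root of the binary form $xA_{m}+yB_{m}$, these orders summing to $m+1$ over $E$), compared with the intersection number $(E,\tilde f)_{p}$ (summing to $\ord(f)$): in the example the defect $1$ is concentrated at the tangent saddle-node, where the tangency order is $2$ but $(E,\tilde f)_{p}=1$. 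Rebuilding the induction around this invariant relative to the divisor is essentially what Mattei--Salem do, and it is genuinely different bookkeeping from yours. Your base case suffers from the same blindness: for a saddle-node germ with both separatrices counted in $f$, the equality $\mult(W)=\ord(f)-1$ holds regardless of any transversality to a divisor, so it cannot certify the second-type condition.
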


Let $f(x,y)=\sum_{i=0}^{n}a_i(x)y^{n-i}\in \C\{x\}[y]$ with $n=\deg_{y}(f)=\ord f$ and $a_0(0)\neq 0$. Consider a $1$-form $W \in \Omega^{1}_{\mathbb{C}^2,0}$ in the Weierstrass form as \eqref{eq:W1}, where $\omega =A(x,y)\dd x+B(x,y)\dd y$. Let us relate some algebraic aspects of the $1$-forms $W$, $\omega$ and the {\it parameters} $h$ and $p$ in the Weierstrass form. 

Notice that $\frac{W\wedge \dd f}{\dd x\wedge \dd y}\in(f)$ if and only if $\frac{\omega\wedge \dd f}{\dd x\wedge \dd y}\in(f)$. More specifically, if $g$ is the cofactor of $W$, that is, $W\wedge \dd f=gf\dd x\wedge \dd y$, then the cofactor of $\omega$ is $g-pf_y$.

In addition, remark that:
\begin{itemize}
	\item If $h(0,0)=0$ then $\mult(W)=\mult(\dd f)$ if and only if $\mult(\omega)=\mult(\dd f).$
		
	\item If $h(0,0)\neq 0$ then $\mult(W)=\mult(\dd f)$ if and only if $\mult(\omega)\geq \mult(\dd f).$
\end{itemize}

As, by Theorem \ref{secondtype}, the equality $\mult(W)=\mult(\dd f)$ characterizes the $1$-forms that define second type foliations, we can read this property using $\omega$.

\medskip

In \eqref{eq:W1} we remarked that for any Weierstrass form $W=h\dd f+pf\dd x+\omega,$ 
	where $\omega=\sum A_{i,j}x^{i-1}y^j \dd x+ \sum B_{i,j}x^{i}y^{j-1} \dd y=A \dd x+B\dd y$  for some $A_{i,j},B_{i,j}\in \C$, we get $B_{0,n}=0$.
	
	If $f\in K(n,m)$ and $W\in[Fol(f)]$ we can obtain additional information about the coeficient $A$. 
	
	We can write,  without lost of generality
\begin{equation}
\label{eq:g1}
f(x,y)=y^n-x^m+\sum_{in+jm>nm}a_{i,j}x^iy^j.
\end{equation} 

\begin{lema}
\label{coef:0}
Let  $f\in K(n,m)$. If $W\in [Fol(f)]$, with $W$ as in \eqref{eq:W1}, then $A_{m,0}=0$. 
\end{lema}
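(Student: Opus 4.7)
The plan is to exploit the Euler vector field $E := nx\,\partial_x + my\,\partial_y$ attached to the weights $(n,m)$, combined with the Weierstrass constraint $\deg_y B < n-1$. Since $E(y^n - x^m) = nm(y^n-x^m)$, expression \eqref{eq:g1} yields $E(f) = nm\,f + \epsilon$ with $\upsilon_{n,m}(\epsilon) > nm$. The separatrix hypothesis $f \mid W\wedge \mathrm{d}f$ together with $W = h\,\mathrm{d}f + pf\,\mathrm{d}x + \omega$ reduces immediately to $\omega \wedge \mathrm{d}f = g f\,\mathrm{d}x\wedge \mathrm{d}y$ for some $g \in \mathbb{C}\{x,y\}$. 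Contracting with $E$ and using $i_E(\mathrm{d}x\wedge \mathrm{d}y) = nx\,\mathrm{d}y - my\,\mathrm{d}x$, $i_E(\omega\wedge \mathrm{d}f) = F\,\mathrm{d}f - E(f)\,\omega$ with $F := i_E\omega = nxA + myB$, and substituting $E(f) = nmf + \epsilon$, I obtain
$$F\,\mathrm{d}f - \epsilon\,\omega \;=\; f\bigl(nm\,\omega + g(nx\,\mathrm{d}y - my\,\mathrm{d}x)\bigr),$$
so $f$ divides $F\,\mathrm{d}f - \epsilon\,\omega$.

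Let $w_0 := \upsilon_{n,m}(\omega)$ and let $(\,\cdot\,)^{[w]}$ denote the weight-$w$ quasi-homogeneous component. Since $\upsilon_{n,m}(F)\geq w_0$, $\upsilon_{n,m}(\mathrm{d}f) = nm$, and $\upsilon_{n,m}(\epsilon\,\omega) > w_0 + nm$, the weight-$(w_0 + nm)$ part of the identity above reads $F^{[w_0]}\,\mathrm{d}f_0 = f_0\,\eta$ for some weight-$w_0$ 1-form $\eta$, where $f_0 = y^n - x^m$ and $\mathrm{d}f_0 = -mx^{m-1}\,\mathrm{d}x + n y^{n-1}\,\mathrm{d}y$. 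Because $\gcd(n,m) = 1$, $f_0$ is irreducible and coprime to both $x^{m-1}$ and $y^{n-1}$ in $\mathbb{C}[x,y]$, so the divisibility $f_0 \mid F^{[w_0]}\,\mathrm{d}f_0$ forces $f_0 \mid F^{[w_0]}$.

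I now split on $w_0$. If $w_0 < nm$, then $\upsilon_{n,m}(F^{[w_0]}) = w_0 < nm = \upsilon_{n,m}(f_0)$, and divisibility forces $F^{[w_0]} = 0$, i.e.\ $nA_{i,j} + mB_{i,j} = 0$ for every $(i,j)$ with $ni+mj=w_0$. Using the Weierstrass support ranges ($i \geq 1$ for $A$, $j \geq 1$ for $B$, and $j \leq n-1$ for both), this kills the pure $A_{i,0}$ and $B_{0,j}$ terms at weight $w_0$ and leaves $\omega^{[w_0]} = -\tfrac{1}{m}H(x,y)(nx\,\mathrm{d}y - my\,\mathrm{d}x)$ for a non-zero quasi-homogeneous polynomial $H$ of weight $w_0-n-m$. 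Under the toric morphism $\Pi(u,v) = (u^n,u^m v)$ one computes $\Pi^\ast(nx\,\mathrm{d}y - my\,\mathrm{d}x) = nu^{n+m}\,\mathrm{d}v$ and hence $\Pi^\ast\omega^{[w_0]} = n u^{w_0}H(1,v)\,\mathrm{d}v$, so the exceptional divisor $\{u=0\}$ fails to be invariant under the strict transform; passing to the factorization of $\Pi$ through point blow-ups produces a dicritical component in the Seidenberg reduction of $\mathcal{F}_W$, contradicting the nondicriticality built into $W \in [Fol(f)]$. Therefore $w_0 \geq nm$. If $w_0 > nm$ then $A_{m,0} = 0$ automatically, since $x^{m-1}\,\mathrm{d}x$ has weight $nm$. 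If $w_0 = nm$, the Weierstrass constraint $\deg_y B < n-1$ rules out the weight-$nm$ monomial $y^{n-1}\,\mathrm{d}y$, leaving $\omega^{[nm]} = A_{m,0}\,x^{m-1}\,\mathrm{d}x$ and $F^{[nm]} = nA_{m,0}\,x^m$; the divisibility $f_0 \mid F^{[nm]}$ combined with equality of weighted orders forces $F^{[nm]} = c(y^n - x^m)$ for some $c\in \mathbb{C}$, and matching the $y^n$-coefficient gives $c=0$, whence $A_{m,0}=0$. The delicate point is the dicriticality argument when $w_0 < nm$, where I need to invoke (or prove in passing) that a weight-$w_0$ initial form proportional to $nx\,\mathrm{d}y - my\,\mathrm{d}x$ produces a non-invariant exceptional component in the ordinary reduction of singularities of $\mathcal{F}_W$.
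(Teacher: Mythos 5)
Your route is genuinely different from the paper's. The paper's own proof is a one-line substitution: since $f\mid \omega\wedge \dd f$, the pullback of $\omega=A\dd x+B\dd y$ along a Puiseux parametrization $(t^n,t^m+\cdots)$ vanishes, and one inspects the coefficient of $t^{nm-1}$; because $\gcd(n,m)=1$ the only lattice points on $ni+mj=nm$ are $(m,0)$ and $(0,n)$, and $B_{0,n}=0$ by the Weierstrass constraint, which isolates $nA_{m,0}$. In particular the paper never uses nondicriticality. Your contraction with the Euler field $E=nx\partial_x+my\partial_y$ is correct and yields the cleaner structural fact $f_0\mid F^{[w_0]}$ with $F=nxA+myB$; your treatment of the cases $w_0=nm$ and $w_0>nm$ is complete and right (and note that $\upsilon_{n,m}(h\dd f+pf\dd x)\geq nm$, so in the remaining case the weighted initial part of $\omega$ really is that of $W$, which you need).

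The gap is exactly where you flag it, and as written the step would fail: $\Pi(u,v)=(u^n,u^mv)$ is an $n$-to-one ramified cover, not a birational modification, so it does not ``factor through point blow-ups'' and the non-invariance of $\{u=0\}$ under this pullback says nothing directly about the Seidenberg reduction of $\mathcal{F}_W$. The statement you need is nevertheless true, and the repair is standard: take a unimodular toric chart $x=u^nv^c$, $y=u^mv^d$ with $nd-mc=1$ (a genuine composition of point blow-ups) and write $\Pi^*W=\Pi^*(F)\frac{\dd u}{u}+\Pi^*(G)\frac{\dd v}{v}$ with $G=cxA+dyB$. The divisor $\{u=0\}$ is non-invariant precisely when $\ord_u\Pi^*(G)=\upsilon_{n,m}(G)<\upsilon_{n,m}(F)=\ord_u\Pi^*(F)$; your computation gives $F^{[w_0]}=0$ while $\omega^{[w_0]}=H(nx\dd y-my\dd x)$ forces $G^{[w_0]}=(dn-cm)xyH=xyH\neq 0$, so the inequality holds, the divisor is dicritical, and $W\in[Fol(f)]$ is contradicted. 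With that substitution your argument closes, at the cost of resolution-theoretic input that the paper's parametrization argument avoids entirely.
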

\begin{proof}
Consider  $f(x,y)$ as in \eqref{eq:g1}. Since $C:\{f(x,y)=0\}$ is the separatrix of ${\mathcal F}_W$ and $\omega=W-h\dd f- p f\dd x$  then $f(x,y)=0$ is also a separatrix of $w$. Consider a parametrization of $f(x,y)=0$ given by $(x(t),y(t))=(t^n,\, t^m+\cdots)$, where $\cdots$ means terms of greater degree.  Therefore $\sum A_{i,j}x(t)^{i-1}y(t)^j \dd x(t)+ \sum B_{i,j}x(t)^iy(t)^{j-1} \dd y(t) =0$.  Suppose that $A_{m,0}\neq 0$. Since $A_{m,0}x(t)^{m-1}\dd x(t)=nA_{m,0}t^{nm-1}$ then  there is $(i_{0},j_{0})\in \supp{\mathcal A}
\cup  \supp{\mathcal B}$ such that $ni_{0}+mj_{0}=nm$, where ${\mathcal A}=\sum A_{i,j}x(t)^{i-1}y(t)^j \dd x(t)-nA_{m,0}t^{nm-1} $ and ${\mathcal B}=\sum B_{i,j}x(t)^{i}y(t)^{j-1}\dd y(t)$. But $n$ and $m$ are coprime, so
$(i_{0},j_{0})\in \{(m,0),(0,n)\}$, which is a contradiction since these  points are not in 
$\supp{\mathcal A}\cup  \supp{\mathcal B}.$
\end{proof}

\section{Characterization of generalized curve foliations: general case}
\label{Char}

In this section we present our  main result. For that we need the notion of  \emph{GSV-index}.

Let $f(x,y)\in \C\{x,y\}$ and $W \in [Fol(f)]$. By \cite[page 198]{Lins} in the irreducible case and \cite[(1.1) Lemma]{Suwa} in the reduced case, there are $g,k\in \C\{x,y\}$ and a 1-form $\eta$ such that $gW=k\dd f+f\eta$, with $f$ and $k$ coprime. 

\begin{defi}
\label{def:GSV}
With the above notations, the GSV-index of  $W$ with respect to $C:\{f(x,y)=0\}$ is
\[
GSV(W,C):=\frac{1}{2\pi i}\int_{\partial C}\frac{g}{k}\dd \left(\frac{k}{g}\right).
\]
\end{defi}
Suppose that $f(x,y)$ is irreducible and consider a parametrization $(x(t),y(t))$ of $C$. Write $W=A(x,y)\dd x+ B(x,y)\dd y$ and $\eta=p(x,y)\dd x+ q(x,y)\dd y$. We have 
\[
W=\left(\frac{k}{g}f_{x}+\frac{f}{g}p\right)\dd x+ \left(\frac{k}{g}f_{y}+\frac{f}{g}q\right)\dd y.
\]

On the other hand, 
\[
\frac{k}{g}(x(t),y(t))=\frac{\left(\frac{k}{g}f_{y}+\frac{f}{g}q\right)}{f_{y}}(x(t),y(t)),
\]

so the number of zeroes of $k$ restricted to $C$ minus the number of zeroes of $g$ restricted to $C$ equals $\ord_{t}\frac{B}{f_{y}}(x(t),y(t))$. Hence by Rouch\'e-Hurwitz theorem we have
\begin{equation}
\label{GSV}
GSV(W,C)=\ord_{t}\frac{B}{f_{y}}(x(t),y(t)).
\end{equation}

A similar calculation as before shows that 

\begin{equation*}
\label{GSV2}
GSV(W,C)=\ord_{t}\frac{A}{f_{x}}(x(t),y(t)).
\end{equation*}

Now, consider $C:\{f(x,y)=0\}$ for $f=f_{1}\cdot f_{2}$ with $f_{1}$ and $f_{2}$ irreducible. After \cite[page 532]{Brunella} we have 
\begin{equation}
\label{GSV reducida}
GSV(W,C)=GSV(W,C_{1})+GSV(W,C_{2})-2i_{0}(f_{1},f_{2}),
\end{equation}

where $C_{i}:\{f_{i}(x,y)=0\}.$ The equality \eqref{GSV reducida} is also true when $f_{1},f_{2}$ are reduced, not necessary irreducible.

\medskip

Cavalier and Lehmann gave a characterization of generalized curve foliations using the $GSV$-index:

\begin{teorema}(\cite[Th\'eor\`eme 3.3]{Cav-Le})
\label{th:Cav-Leh}
Let $C:\{f(x,y)=0\}$ be a reduced curve  and ${\mathcal F}_W \in Fol(f)$ a nondicritical foliation. Then  ${\mathcal F}_W$ is generalized curve if and only if $GSV(W,C)=0$.
\end{teorema}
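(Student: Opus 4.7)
The plan is to reduce the statement to the Milnor number criterion of Theorem \ref{curvagene} by establishing the intersection-theoretic identity
\[
\mu(\mathcal{F}_W)=\mu(f)+GSV(W,C).
\]
Once this identity is available, the inequality $\mu(\mathcal{F}_W)\geq\mu(f)$ from Theorem \ref{curvagene}, together with its characterization of equality, immediately translates into $GSV(W,C)\geq 0$ with equality precisely when $\mathcal{F}_W$ is generalized curve.

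First I would rewrite $GSV(W,C)$ as a difference of intersection multiplicities. Working in sufficiently generic coordinates (both sides of the target identity are coordinate-invariant, so this entails no loss of generality), the parametric formula (7) combined with the multiplicativity of $i_0$ gives $GSV(W,C_i)=i_0(B,f_i)-i_0((f_i)_y,f_i)$ for each irreducible branch $C_i$. Iterating (8) and expanding $f_y$ via the Leibniz rule across $f=\prod_i f_i$ then yields the closed form
\[
GSV(W,C)=i_0(B,f)-i_0(f_y,f).
\]

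Next I would exploit the cofactor relation $A f_y - B f_x = f g$, valid because $C$ is the union of the separatrices of $\mathcal{F}_W$. Reducing this relation modulo $B$ gives $A f_y\equiv f g$, and multiplicativity of $i_0$ produces
\[
i_0(A,B)+i_0(B,f_y)=i_0(f,B)+i_0(g,B).
\]
Combined with Teissier's Lemma $i_0(f_y,f)=\mu(f)+i_0(f,x)-1$, the target identity reduces to the auxiliary relation $i_0(B,g)-i_0(B,f_y)=1-i_0(f,x)$, which I would verify by a direct intersection-theoretic computation, reading $g$ off as the determinant entry that encodes the tangential defect of $\mathcal{F}_W$ along $C$.

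The main obstacle is this final step: the cofactor $g\in\C\{x,y\}$ is defined only implicitly, and in non-generic coordinates the pairs $(B,f_y)$ or $(B,g)$ may share irreducible components, making the individual intersection multiplicities infinite and breaking the bookkeeping. A cleaner alternative bypasses the cofactor entirely by passing to the simultaneous desingularization of $\mathcal{F}_W$ and $C$: one shows that $GSV(W,C)$ decomposes additively over the infinitely near singular points of the strict transform with no boundary contribution from the exceptional divisor (which is not a separatrix). A direct local computation at each Seidenberg-irreducible singularity then shows that the local $GSV$ vanishes at non-saddle-node singularities and is a strictly positive integer at saddle-nodes, yielding the equivalence claimed in the theorem.
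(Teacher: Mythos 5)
First, a remark on the comparison you were asked for: the paper does not prove this statement at all — it is imported verbatim from Cavalier--Lehmann \cite[Th\'eor\`eme 3.3]{Cav-Le} — so your attempt has to stand entirely on its own. Your closed form $GSV(W,C)=i_0(B,f)-i_0(f_y,f)$ (in coordinates where both numbers are finite) is correct, and it is essentially the computation the paper performs inside the proof of Theorem \ref{general}. The fatal problem is the identity on which your whole reduction rests, $\mu(\mathcal{F}_W)=\mu(f)+GSV(W,C)$: it is \emph{false}. Take $W=y(y-x)\dd x+x^2\dd y$. It is nondicritical since $x\cdot(y^2-xy)+y\cdot x^2=xy^2\not\equiv 0$; one blow-up reduces it, the chart $y=ux$ produces the saddle-node $u^2\dd x+x\dd u$ whose weak separatrix is the exceptional divisor, the chart $x=vy$ produces a simple singularity, and the separatrices downstairs are exactly $x=0$ and $y=0$, so $f=xy$ and ${\mathcal F}_W\in Fol(f)$. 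Here $\mu({\mathcal F}_W)=i_0\bigl(y(y-x),x^2\bigr)=4$ and $\mu(f)=1$, while $GSV(W,C_1)=\ord_t A(0,t)=2$, $GSV(W,C_2)=\ord_t B(t,0)=2$, and by \eqref{GSV reducida} $GSV(W,C)=2+2-2\,i_0(x,y)=2\neq 3=\mu({\mathcal F}_W)-\mu(f)$. The correct relation carries an additional non-negative term, a tangency excess supported on the saddle-nodes whose weak separatrix lies in the divisor (in the sense of Mattei--Salem), which vanishes exactly for second-type foliations; your example-checks all happened to be of second type. This is also why the ``auxiliary relation'' $i_0(B,g)-i_0(B,f_y)=1-i_0(f,x)$, which you defer to ``a direct intersection-theoretic computation'', resists proof: it is equivalent to the false identity. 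Consequently the Milnor-number criterion of Theorem \ref{curvagene} cannot be transported to the $GSV$-index by this bookkeeping.

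The fallback sketched in your last paragraph does not close the gap either. The $GSV$-index is not simply additive over the infinitely near points of the resolution: its behaviour under a single blow-up involves correction terms coming from the multiplicities of the foliation and of the separatrix and from the divisor components, and these are exactly where the saddle-node defect hides. Moreover, the assertion that the local contribution vanishes at non-saddle-node singularities and is strictly positive at saddle-nodes is essentially the content of the theorem, not an available input. To repair the argument you would either have to prove the blow-up formula for $GSV$ and control the divisor contributions (which is close in spirit to what Cavalier--Lehmann actually do, via localization of Baum--Bott residues), or establish the corrected identity $\mu({\mathcal F}_W)-\mu(f)=GSV(W,C)+\tau$ with $\tau\geq 0$ vanishing precisely for second-type foliations, together with the separate fact that generalized curve foliations are of second type.
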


\medskip

In the remainder of the section we will present our main result and consequences of it.

\medskip

Let $f\in \C\{x\}[y]$, where $f=f_1\cdots f_r$ is the factorization of $f$ into irreducible factors. Consider  $C:\{f(x,y)=0\}$.

\begin{teorema}\label{general}
Let ${\mathcal F}_W\in Fol(f)$ and 
 $h\dd f+ pf \dd x+ A \dd x+ B\dd y=\mathcal{A} \dd x + \mathcal{B} \dd y$ the Weierstrass form of $W$ with respect to $f$. Then
${\mathcal F}_W $ is a generalized curve foliation if and only if $h \in \C\{x,y\}$ is a unit and $i_{0}(\mathcal{B}, f)=\mu(f)+i_0(f,x)-1.$
\end{teorema}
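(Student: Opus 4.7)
The plan hinges on the Cavalier--Lehmann criterion (Theorem \ref{th:Cav-Leh}): $\mathcal F_W$ is generalized curve iff $GSV(W,C)=0$. So I split the work into (i) translating the vanishing of $GSV$ into the stated intersection-number equality via the Weierstrass-form decomposition, and (ii) showing that $h$ is forced to be a unit in the generalized-curve case.

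For (i), since $a_0(0)\neq 0$ and every branch of $C$ passes through the origin, each irreducible factor $f_i$ is a Weierstrass polynomial in $y$ up to a unit, so $(f_i)_y$ is coprime to $f_i$ and formula \eqref{GSV} applies branch-by-branch. Choosing a Puiseux parametrization $(x_i(t),y_i(t))$ of $C_i$, I get
\[
GSV(W,C_i)=\ord_t\tfrac{\mathcal B}{(f_i)_y}(x_i(t),y_i(t))=i_0(\mathcal B,f_i)-i_0((f_i)_y,f_i),
\]
and \eqref{LTeissier} applied to $f_i$ rewrites this as $i_0(\mathcal B,f_i)-\mu(f_i)-i_0(f_i,x)+1$. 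Summing over the $r$ branches, iterating \eqref{GSV reducida}, and inserting the standard Milnor formula $\mu(f)=\sum_i\mu(f_i)+2\sum_{i<j}i_0(f_i,f_j)-(r-1)$, the cross-intersection terms $i_0(f_i,f_j)$ cancel cleanly to produce
\[
GSV(W,C)=i_0(\mathcal B,f)-\mu(f)-i_0(f,x)+1.
\]
Hence $GSV(W,C)=0$ is equivalent to the identity $i_0(\mathcal B,f)=\mu(f)+i_0(f,x)-1$.

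For (ii), I invoke Proposition \ref{Newton}: $\mathcal F_W$ generalized curve forces $\mathcal{NP}(W)=\mathcal{NP}(f)$. Since every branch passes through the origin, $f(0,y)=a_0(0)y^n$, making $(0,n)$ the topmost vertex of $\mathcal{NP}(f)$. As $\supp(x\mathcal A)$ is contained in $\{(i,j):i\geq 1\}$, this vertex can appear in $\supp W$ only through $\supp(y\mathcal B)$, forcing the $x^0 y^{n-1}$-coefficient of $\mathcal B$ to be non-zero. Writing $\mathcal B=hf_y+B$ with $\deg_y B<n-1$ and using $f_y(0,y)=na_0(0)y^{n-1}$, that coefficient equals $h(0,0)\cdot na_0(0)$, so $h(0,0)\neq 0$ and $h$ is a unit. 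The converse implication needs only step (i): from $h$ a unit and the intersection-number identity, the display above yields $GSV(W,C)=0$, and Cavalier--Lehmann concludes that $\mathcal F_W$ is generalized curve.

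The main technical obstacle is the combinatorial cancellation in step (i): it is essential to apply \eqref{GSV} to each branch with its own polynomial $f_i$ (not the global $f$), so that Teissier's formula on each $f_i$ yields the sums $\sum_i\mu(f_i)$ and $\sum_i i_0(f_i,x)$; these then recombine with \eqref{GSV reducida} and the reducible Milnor formula so that each $i_0(f_i,f_j)$ disappears. Once this bookkeeping is honest, both directions of the theorem collapse to one-line arguments.
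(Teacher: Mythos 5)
Your proposal is correct and follows essentially the same route as the paper: both directions reduce to the Cavalier--Lehmann criterion, with $GSV(W,C)$ computed branch-by-branch from $\mathcal B=hf_y+B$ via \eqref{GSV}, Teissier's lemma \eqref{LTeissier}, formula \eqref{GSV reducida} and the reduced Milnor formula, while the unit-ness of $h$ in the forward direction comes from Proposition \ref{Newton} together with $B_{0,n}=0$. Your write-up merely makes explicit two points the paper leaves terse (the cancellation of the $i_0(f_i,f_j)$ terms and the Newton-polygon vertex argument at $(0,n)$), which is a welcome clarification rather than a deviation.
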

\begin{proof} First, observe that $\mathcal{B}=hf_{y}+B$. Hence, 
\[i_{0}(\mathcal{B}, f)=i_{0}(hf_{y}+B, f)=\sum_{j=1}^{r}i_{0}\Big(h\prod_{i\neq j} f_i(f_j)_y+B,f_j \Big).\]
Suppose that  $\sum_{j=1}^{r}i_{0}\big(h\prod_{i\neq j} f_i(f_j)_y+B,f_j \big)=\mu(f)+i_0(f,x)-1$ and $h(x,y)$ is a unit.\\ If $C_i:\{f_i(x,y)=0\}$ and $C:\{f_1\cdots f_r=0\}$
 then, by \eqref{GSV reducida}, the  $GSV$- index of $W$ with respect to $C$ is 
 
 \[
 GSV(W,C) = \sum_{i=1}^{r}GSV(W,C_i)-2\sum_{1\leq i<j\leq r}i_{0}(f_i,f_j).
 \]

By hypothesis, 
\[
W=\Big(h\sum _{i=1}^{r} \prod_{j\neq i}f_j(f_i)_x+fp+A\Big)\dd x+
\Big(h\sum_{i=1}^{r}\prod_{j\neq i} f_j(f_i)_y+B\Big)\dd y=\mathcal{A}\dd x+\mathcal{B}\dd y,
\]
hence $\mathcal{A}=h\sum _{i=1}^{r} \prod_{j\neq i}f_j(f_i)_x+fp+A$ and $\mathcal{B}=h\sum_{i=1}^{r}\prod_{j\neq i} f_j(f_i)_y+B$.
Then by \eqref{GSV} the $GSV$-index of  $W$ with respect to the separatrix $C_j:\{f_j(x,y)=0\}$ is

\[
 GSV(W,C_{j}) =\ord_{t} \left(\frac{{\mathcal B}(x_{j}(t),y_{j}(t))}{(f_{j})_{y}(x_{j}(t),y_{j}(t))}\right)=i_{0}({\mathcal B},f_{j})-i_{0}((f_j)_y,f_{j}),
\]
where $(x_{j}(t),y_{j}(t))$ is a parametrization of $C_{j}$. By definition of ${\mathcal B}$ and using \eqref{LTeissier} we have
\[
 GSV(W,C_{j}) =i_{0}\Big(h\prod_{i\neq j} f_i(f_j)_y+B,f_j \Big)-\Big(\mu(f_{j})+i_0(f_{j},x)-1\Big).
\]

Hence
\[
 GSV(W,C) =\sum_{j=1}^{r}i_{0}\Big(h\prod_{i\neq j} f_i(f_j)_y+B,f_j \Big)-\sum_{j=1}^{r}\Big(\mu(f_{j})+i_0(f_{j},x)-1\Big)-2\sum_{1\leq i<j\leq r}i_{0}(f_i,f_j).
 \]
 Now, by the formula of Milnor for a reduced curve (see for example \cite[Theorem 6.5.1]{Wall}) we obtain
 \begin{equation}
 \label{eq:GSV reducida}
 GSV(W,C) =\sum_{j=1}^{r}i_{0}\Big(h\prod_{i\neq j} f_i(f_j)_y+B,f_j \Big)-\Big(\mu(f)+i_0(f,x)-1\Big).
 \end{equation}

So, by Theorem \ref{th:Cav-Leh}
we conclude that   ${\mathcal F}_W$ is a generalized curve foliation.\\

Suppose now that ${\mathcal F}_W$ is a generalized curve foliation which   union of separatrices is $C:\{f(x,y)=0\}$. Hence ${\mathcal NP}(W)={\mathcal NP}(f)$. Moreover, since $h\dd f+ pf \dd x+ A \dd x+ B\dd y$ is the Weierstrass form of $W$ with respect to $f$  we get 
$B_{0,n}=0$ and we conclude that $h$ is a unit. In particular, we have again the equality \eqref{eq:GSV reducida}.

Finally, since ${\mathcal F}_W$ is a generalized curve foliation, again by Theorem \ref{th:Cav-Leh}, we get $0=GSV(W,C)$. This finishes the proof.
\end{proof}

\medskip

Theorem \ref{general} gives a characterization of a generalized curve foliation  ${\mathcal F}_W$, where $W=\mathcal{A} \dd x + \mathcal{B} \dd y$, using the polar $\mathcal{B}$ of the foliation ${\mathcal F}_W$ and the {\em polar} $f_{y}$ of the separatrix $C:\{f(x,y)=0\}$.

\begin{coro}
\label{cor:polar separatrix}
Let ${\mathcal F}_W\in Fol(f)$ and 
 $h\dd f+ pf \dd x+ A \dd x+ B\dd y=\mathcal{A} \dd x + \mathcal{B} \dd y$ the Weierstrass form of $W$ with respect to $f$. Then
${\mathcal F}_W $ is a generalized curve foliation if and only if $h \in \C\{x,y\}$ is a unit and $i_{0}(\mathcal{B}, f)=i_{0}(f_{y},f).$ In particular $i_{0}(B, f)\geq i_{0}(f_{y},f).$
\end{coro}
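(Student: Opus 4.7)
The plan is to derive this corollary as an immediate reformulation of Theorem \ref{general} via Teissier's Lemma. First, I would invoke equation \eqref{LTeissier}, which asserts $i_0(f, f_y) = \mu(f) + i_0(f, x) - 1$. Plugging this identity into the characterization supplied by Theorem \ref{general} allows one to rewrite the defining condition $i_0(\mathcal{B}, f) = \mu(f) + i_0(f, x) - 1$ as $i_0(\mathcal{B}, f) = i_0(f_y, f)$. Together with the requirement that $h$ be a unit (unchanged from Theorem \ref{general}), this yields the equivalence stated in the first part of the corollary.

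The second part, namely the inequality $i_0(B, f) \geq i_0(f_y, f)$, then follows from the equivalence just obtained. Supposing $\mathcal{F}_W$ is a generalized curve foliation, I would use $h$ being a unit to note that $i_0(hf_y, f) = i_0(f_y, f) = i_0(\mathcal{B}, f)$, and then rewrite $B = \mathcal{B} - hf_y$. Decomposing $f = f_1\cdots f_r$ into irreducible factors and parametrizing each branch $f_j$ by $(x_j(t), y_j(t))$, the intersection number with $f_j$ equals the $t$-adic order of the restriction, so the elementary inequality $\mathrm{ord}(\alpha - \beta) \geq \min(\mathrm{ord}(\alpha), \mathrm{ord}(\beta))$, combined with the constraint $\deg_y B < n-1$ coming from the uniqueness in Lemma \ref{le:preW}, forces $i_0(B, f) \geq i_0(f_y, f)$.

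The bulk of the work is already done by Theorem \ref{general} and Teissier's Lemma; the main equivalence is thus essentially a one-line substitution. The only point where care is needed is the branch-by-branch analysis underlying the inequality in the reducible case, where potential cancellations between branches must be tracked; the degree restriction $\deg_y B < n-1$ from the Weierstrass form is what prevents such cancellations from producing a global deficit, so I do not expect this to be a serious obstacle.
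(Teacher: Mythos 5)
The main equivalence is handled exactly as the paper intends: the corollary is stated without a separate proof precisely because, by Teissier's Lemma \eqref{LTeissier}, the quantity $\mu(f)+i_0(f,x)-1$ appearing in Theorem \ref{general} \emph{is} $i_0(f_y,f)$, so the first part of your argument is a correct one-line substitution and coincides with the paper's implicit reasoning.

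The ``in particular'' clause, however, has a genuine gap in the reducible case. Writing $B=\mathcal{B}-hf_y$ and invoking $\ord(\alpha-\beta)\geq\min(\ord\alpha,\ord\beta)$ only gives, on each branch $C_j$ of $C$, the estimate $i_0(B,f_j)\geq\min\{i_0(\mathcal{B},f_j),\,i_0(f_y,f_j)\}$. Summing over $j$ yields $i_0(B,f)\geq\sum_j\min\{i_0(\mathcal{B},f_j),i_0(f_y,f_j)\}$, and since $\sum_j\min\{a_j,b_j\}$ can be strictly smaller than $\min\{\sum_j a_j,\sum_j b_j\}$, the \emph{global} equality $i_0(\mathcal{B},f)=i_0(f_y,f)$ furnished by the first part does not suffice: if the minimum is attained by $\mathcal{B}$ on some branches and by $f_y$ on others, the sum of minima falls below $i_0(f_y,f)$ and the desired inequality does not follow. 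In other words, the ultrametric inequality $i_0(g_1+g_2,f)\geq\min\{i_0(g_1,f),i_0(g_2,f)\}$ is simply false for reducible $f$. Your proposed remedy --- the degree bound $\deg_y(B)<n-1$ from Lemma \ref{le:preW} --- does not address this at all: that bound constrains $B$ as a polynomial in $y$ but says nothing about how the orders of vanishing of $\mathcal{B}$ and $hf_y$ distribute among the branches. What is actually needed is the branch-wise equality $i_0(\mathcal{B},f_j)=i_0(f_y,f_j)$ for every $j$; for a generalized curve foliation this holds because each individual index $GSV(W,C_j)=i_0(\mathcal{B},f_j)-i_0((f_j)_y,f_j)$ equals $\sum_{k\neq j}i_0(f_j,f_k)$ (the value computed for $\dd f$ itself), which is a strictly stronger input than the vanishing of the total $GSV(W,C)$ used in Theorem \ref{general}. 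When $f$ is irreducible there is a single branch and your argument is complete; for $r\geq 2$ the branch-by-branch bookkeeping you flagged as a non-issue is in fact the whole difficulty.
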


Suppose now that $f\in\mathbb{C}\{x\}[y]$ is an irreducible monic  polynomial  of degree $n=\ord f$ with semigroup  $\Gamma(f) = \langle v_0,v_1,v_2,\ldots ,v_g \rangle$ ($v_{0}=n$). Remember that we denote $e_{i}=\gcd(v_{0} ,\ldots,v_{i})$ for $i\in \{0,\ldots, g\}$ and $n_{i}=	\frac{e_{i-1}}{e_{i}}$ for $i\in\{1,\ldots, g\}$. By convention we put $n_{0}=1$. We say that $f_k\in\mathbb{C}\{x\}[y]$ is a $k$-{\bf semiroot} of the polynomial $f$ if $f_k$ is monic, $\deg_y(f_k)=\frac{v_0}{e_{k-1}}=n_0n_1\cdots n_{k-1}$  and $i_0(f_k,f)=v_k$. The notion of semi-root is a generalization of the  {\bf characteristic approximate roots}  introduced and studied by Abhyankar and Moh in  \cite{A-M}.

Applying \cite[Corollary 5.4]{P-P} to  the polynomial  $B\in \C\{x\}[y]$ in \eqref{eq:W}, it can be uniquely written as a finite sum of the form
\begin{equation}
\label{expansion}
B=\sum_{\hbox{\rm finite}}a_{\alpha}(x)f_1^{\alpha_1}\cdots f_g^{\alpha_g},
\end{equation}

where $a_{\alpha}(x)\in\mathbb{C}\{x\}$, $f_{k}$ are $k$-semiroots of $f$ and $0\leq \alpha_k<n_k$. Since $\deg_yB<\deg_yf-1$ the polynomial $f$ does not appear as a factor in the terms of the right-hand side of \eqref{expansion}.

\medskip

As a $k$-semiroot $f_k$ is irreducible and admits semigroup $\langle \frac{v_0}{e_{k-1}}=\frac{n}{e_{k-1}},\frac{v_1}{e_{k-1}}=\frac{m}{e_{k-1}},\ldots ,\frac{v_{k-1}}{e_{k-1}}\rangle$ it follows that its Newton polygon has a single compact face with vertices $\left(0,\frac{m}{e_{k-1}}\right)$ and $\left(\frac{n}{e_{k-1}},0\right)$ and consequently $v_{n,m}(f_k)=\frac{nm}{e_{k-1}}$. \\

In this way, using the relations \eqref{exponentes Puiseux} we get
\begin{equation}
\label{*}
i_{0}(f,f_k)=v_k=\frac{nm}{e_{k-1}}+\sum_{i=2}^{k}\frac{n_i\cdots n_k}{n_{k}}(\beta_i-\beta_{i-1})=v_{n,m}(f_k)+\sum_{i=2}^{k}\frac{e_{i-1}}{e_{k-1}}(\beta_i-\beta_{i-1}).
\end{equation}
In particular, $i_0(f,f_k)\geq v_{n,m}(f_k)$ with equality if and only if $k=1$.

\begin{nota}
\label{distintos}
For any two distinct terms $T_{\alpha}:=a_{\alpha}(x)f_1^{\alpha_1}\cdots f_g^{\alpha_g}$ and $T_{\alpha'}:=a_{\alpha'}(x)f_1^{\alpha'_1}\cdots f_g^{\alpha'_g}$ of \eqref{expansion} we get $i_0(T_{\alpha},f)\neq i_0(T_{\alpha'},f)$.
\end{nota}

In addition, remark that we can not have $\alpha_i=n_i-1$ for all $i=1,\ldots ,g$. Indeed, if this is the case we get
\begin{eqnarray*}
\deg_{y}(B)&=&\sum_{i=1}^{g}(n_i-1)\deg_{y}(f_i)=\sum_{i=1}^{g}(n_i-1)n_0n_1 \cdots n_{i-1}\\
 &=&\sum_{i=1}^{g}n_0n_1 \cdots n_{i-1}n_i-\sum_{i=1}^{g}n_0n_1 \cdots n_{i-1}=n_0n_1\cdots n_g-n_0=v_0-1=n-1,
 \end{eqnarray*}
which is a contradiction.\\

As a consequence of $\eqref{*}$ and Remark \ref{distintos} we have that $i_0(f,B)\geq v_{n,m}(f)$ with equality if and only if $f\in K(n,m)$.

\begin{lema}\label{difiere}
With the above notations, if $f$ is irreducible then $i_0(B,f)\neq i_0(f_y,f)$.
\end{lema}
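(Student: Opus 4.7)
The plan is to compare the unique normalized representations of $i_0(B,f)$ and $i_0(f_y,f)$ in the semigroup $\Gamma(f)=\langle v_0,v_1,\ldots,v_g\rangle$. I will exploit the well-known fact that every $\gamma\in\Gamma(f)$ admits a unique expression
\[
\gamma=\beta_0 v_0+\sum_{k=1}^{g}\beta_k v_k, \qquad \beta_0\geq 0,\ 0\leq\beta_k<n_k\ (1\leq k\leq g),
\]
and show that $i_0(B,f)$ and $i_0(f_y,f)$ produce different such representations.

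For $i_0(B,f)$ I would combine Remark \ref{distintos} with the expansion \eqref{expansion}. Since the intersection numbers of the terms $T_\alpha:=a_\alpha(x)f_1^{\alpha_1}\cdots f_g^{\alpha_g}$ with $f$ are pairwise distinct, the minimum is attained by a single term $T_{\alpha^*}$; using $i_0(x,f)=n=v_0$ together with $i_0(f_k,f)=v_k$ I obtain
\[
i_0(B,f)=i_0(T_{\alpha^*},f)=n\cdot\ord(a_{\alpha^*})+\sum_{k=1}^{g}\alpha^*_k v_k,
\]
with $0\leq\alpha^*_k<n_k$ and, crucially, $(\alpha^*_1,\ldots,\alpha^*_g)\neq(n_1-1,\ldots,n_g-1)$, as observed just before this lemma.

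For $i_0(f_y,f)$ I would invoke Teissier's equality \eqref{LTeissier} together with the classical Milnor number formula for a plane branch, $\mu(f)=\sum_{k=1}^{g}(n_k-1)v_k-n+1$, giving
\[
i_0(f_y,f)=\mu(f)+i_0(f,x)-1=\sum_{k=1}^{g}(n_k-1)v_k,
\]
which is precisely the normalized representation with $\beta_0=0$ and $\beta_k=n_k-1$ for every $k\geq 1$.

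The conclusion then follows by uniqueness: an equality $i_0(B,f)=i_0(f_y,f)$ would force $\ord(a_{\alpha^*})=0$ and $\alpha^*_k=n_k-1$ for every $k$, contradicting the constraint on $\alpha^*$. The step I expect to be the main point is the uniqueness of the normalized expansion in $\Gamma(f)$; this is a standard fact for semigroups of plane branches, deducible from the identities $n_k v_k\in\langle v_0,\ldots,v_{k-1}\rangle$, and together with the Milnor formula it reduces the argument to bookkeeping.
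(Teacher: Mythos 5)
Your proposal is correct and follows essentially the same route as the paper: isolate the unique minimal term of the expansion \eqref{expansion} via Remark \ref{distintos}, compute $i_0(f_y,f)=\sum_{k=1}^{g}(n_k-1)v_k$ from Teissier's formula and the Milnor number of a branch, and derive the contradiction from the constraint $(\alpha_1,\ldots,\alpha_g)\neq(n_1-1,\ldots,n_g-1)$. The only cosmetic difference is that you explicitly invoke the uniqueness of the normalized semigroup representation, which the paper uses implicitly in its final step.
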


\begin{proof}
Suppose that $i_0(f_y,f)=i_0(B,f)$. After Remark \ref{distintos}, there exists a unique $g$-tuple 
$(\alpha_1,\ldots ,\alpha_g)$ with $0\leq \alpha_i<n_i$ such that
\[i_0(B,f)=i_0(a_{\alpha}(x)f_1^{\alpha_1}\cdot\ldots\cdot f_g^{\alpha_g},f)=\sum_{i=1}^{g}\alpha_iv_i+\lambda_0v_0,\]

where $\lambda_0=\ord_{x}a_{\alpha}(x)$. Now, by \eqref{LTeissier} we get $i_0(f_y,f)=\mu(f)+v_0-1=\sum_{i=1}^{g}(n_i-1)v_i$ (see for example \cite[Proposition 7.5 (ii), page 102]{Hefez} for the last equality). In this way, we have
\[\sum_{i=1}^{g}(n_i-1)v_i=i_0(f_y,f)=i_0(B,f)=\sum_{i=1}^{g}\alpha_iv_i+\lambda_0v_0,\]
that is $\sum_{i=1}^{g}(n_i-1-\alpha_{i})v_i-\lambda_0v_0=0$.
But, this implies  that $\lambda_0=0$ and $\alpha_i=n_i-1$ for all $i=1,\ldots ,g$, which is a contradiction. 
Hence, $i_0(f_y,f)\neq i_0(B,f)$.
\end{proof}

\begin{nota}
Observe that Lemma \ref{difiere} is not true for $f$ reduced (non-irreducible): consider $f(x,y)=y^{2}-x^{2}$ and $B=x^{e}$. We have $\deg_{y}(B)=0<1=\deg_{y}(f)-1$, $i_0(f_y,f)=2$ and $i_0(B,f)=2e$. So, for $e=1$ we get $i_0(f_y,f)=i_0(B,f)$ and $i_0(f_y,f)\neq i_0(B,f)$ for $e\neq 1$.
\end{nota}

\begin{coro}
\label{cor: desig}
With the above notations, for $f$ irreducible we have, $i_{0}(B,f)>i_{0}(f_{y},f)$ if and only if  $i_{0}(f_{y}+B,f)=\mu(f)+i_0(f,x)-1$.
\end{coro}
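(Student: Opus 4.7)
The plan is to rewrite the right-hand equality in a cleaner form using Teissier's Lemma and then reduce the statement to a standard property of orders of power series along a parametrization, invoking Lemma~\ref{difiere} to rule out the ``accidental cancellation'' obstruction.

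First, by \eqref{LTeissier} we have $i_0(f_y,f)=\mu(f)+i_0(f,x)-1$, so the target equality $i_0(f_y+B,f)=\mu(f)+i_0(f,x)-1$ is equivalent to
\[
i_0(f_y+B,f)=i_0(f_y,f).
\]
Fix a Puiseux parametrization $(x(t),y(t))$ of the irreducible curve $C:\{f(x,y)=0\}$. For any $H\in\mathbb{C}\{x,y\}$ one has $i_0(H,f)=\ord_t H(x(t),y(t))$, and for $H_1,H_2\in\mathbb{C}\{x,y\}$ the basic valuation inequality gives $\ord_t(H_1+H_2)\geq \min\{\ord_t H_1,\ord_t H_2\}$, with equality whenever the two orders differ. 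Applying this to $H_1=f_y$ and $H_2=B$ yields the key relation
\[
i_0(f_y+B,f)\geq \min\{i_0(f_y,f),\,i_0(B,f)\},\qquad\text{with equality if }i_0(f_y,f)\neq i_0(B,f).
\]

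Now the direct implication is immediate: assuming $i_0(B,f)>i_0(f_y,f)$, the orders differ, so by the displayed equality $i_0(f_y+B,f)=i_0(f_y,f)$. For the converse, suppose $i_0(f_y+B,f)=i_0(f_y,f)$. By Lemma~\ref{difiere} we know $i_0(B,f)\neq i_0(f_y,f)$, so again the orders differ and equality holds. Hence $i_0(f_y,f)=\min\{i_0(f_y,f),i_0(B,f)\}$, which forces $i_0(B,f)>i_0(f_y,f)$ (a strict inequality, since equality is excluded by Lemma~\ref{difiere}).

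The main obstacle is the converse direction, where a priori one could imagine $i_0(B,f)<i_0(f_y,f)$ combined with cancellation of leading terms producing $i_0(f_y+B,f)=i_0(f_y,f)$; this is exactly the scenario ruled out by Lemma~\ref{difiere}, so the role of that lemma is essential. Note that without irreducibility this cancellation \emph{can} occur, which is consistent with the remark preceding the corollary.
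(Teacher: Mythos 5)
Your proof is correct and follows essentially the same route as the paper: both directions rest on Teissier's Lemma \eqref{LTeissier} to identify $\mu(f)+i_0(f,x)-1$ with $i_0(f_y,f)$, on the valuation property $i_0(f_y+B,f)=\min\{i_0(f_y,f),i_0(B,f)\}$ when the two intersection numbers differ, and on Lemma~\ref{difiere} to exclude equality in the converse direction. The paper's own proof is just a terser version of exactly this argument.
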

\begin{proof}
Suppose that $i_{0}(B,f)>i_{0}(f_{y},f)$. Then 
\[i_{0}(f_{y}+B,f)=\min\{i_{0}(f_{y},f),i_{0}(B,f)\}=i_{0}(f_{y},f),\]
 
and by Teissier's Lemma (see \eqref{LTeissier}) we have $i_{0}(f_{y}+B,f)=\mu(f)+i_0(f,x)-1$.

Now we suppose that $i_{0}(f_{y}+B,f)=\mu(f)+i_0(f,x)-1$, that is, by Teissier's Lemma,
 $i_{0}(f_{y}+B,f)=i_0(f,f_{y})$ and by Lemma \ref{difiere} we conclude $i_{0}(B,f)>i_{0}(f_{y},f)$.
\end{proof}

\begin{nota}
\label{**}
If $W$ is written as \eqref{eq:W} and $W\in [Fol(f)]$ then $A(x,y)\dd x+ B(x,y) \dd y \in [Fol(f)]$. We must have $i_{0}(A,f)+v_{0}=i_{0}(B,f)+v_{1}$. Hence by \eqref{LTeissier} and \eqref{LTeissier2}  we get $i_{0}(A,f)-i_{0}(f_{x},f)=i_{0}(B,f)-i_{0}(f_{y},f)$. So, $i_{0}(B, f) > i_{0}(f_{y}, f)$ if and only if, $i_{0}(A, f) > i_{0}(f_{x}, f)$.
Moreover, by Lemma \ref{difiere}, we have  $i_{0}(A, f)\neq i_{0}(f_{x}, f)$. So,
$i_{0}(f_{x} +A,f)=\min\{i_{0}(f_{x},f),i_{0}(A,f)\}$.
Consequently $i_0(f_y+B,f)=\mu(f)+i_0(f,x)-1$ if and only if $i_0(f_x+A,f)=\mu(f)+i_0(f,y)-1$.
\end{nota}

Next proposition gives us a characterization of generalized curve foliations with a single se\-paratrix.

\begin{prop}
\label{prop:irred}
 Let $f(x,y)\in \C\{x\}[y]$ be irreducible, ${\mathcal F}_W \in Fol(f)$, where
$h\dd f+ pf \dd x+ A \dd x+ B\dd y$ is the Weierstrass form of $W$ with respect to $f$. Then ${\mathcal F}_W$  is a generalized curve foliation if and only if  $h \in \C\{x,y\}$ is a unit and 
$i_0(B,f)>i_0(f_y,f)$.
\end{prop}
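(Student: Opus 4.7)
The plan is to deduce this result from Theorem \ref{general} combined with Teissier's Lemma \eqref{LTeissier} and Lemma \ref{difiere}. Since $f$ is irreducible, Theorem \ref{general} asserts that $\mathcal{F}_W$ is a generalized curve foliation if and only if $h$ is a unit and
\[
i_0(\mathcal{B},f)=\mu(f)+i_0(f,x)-1,
\]
where $\mathcal{B}=hf_y+B$. By Teissier's Lemma, the right-hand side equals $i_0(f_y,f)$, so the condition can be rewritten as $i_0(hf_y+B,f)=i_0(f_y,f)$.

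The next step is to exploit that $h$ is a unit: this gives $i_0(h,f)=0$, and therefore $i_0(hf_y,f)=i_0(f_y,f)$. Applying Lemma \ref{difiere} to the polynomial $B$ (which sits in the Weierstrass form and hence satisfies $\deg_y(B)<n-1$), we obtain $i_0(B,f)\neq i_0(f_y,f)=i_0(hf_y,f)$. Because the two intersection numbers differ, the standard fact for sums of power series restricted to $f$ yields
\[
i_0(hf_y+B,f)=\min\{i_0(hf_y,f),\,i_0(B,f)\}=\min\{i_0(f_y,f),\,i_0(B,f)\}.
\]

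Finally, I combine the pieces. For the forward direction, if $\mathcal{F}_W$ is a generalized curve foliation, then by Theorem \ref{general} $h$ is a unit and $i_0(hf_y+B,f)=i_0(f_y,f)$; the min-formula together with $i_0(B,f)\neq i_0(f_y,f)$ forces $i_0(B,f)>i_0(f_y,f)$ (otherwise the minimum would drop below $i_0(f_y,f)$). For the converse, if $h$ is a unit and $i_0(B,f)>i_0(f_y,f)$, the same min-formula gives $i_0(\mathcal{B},f)=i_0(f_y,f)=\mu(f)+i_0(f,x)-1$, so Theorem \ref{general} again applies and yields that $\mathcal{F}_W$ is generalized curve.

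There is no serious obstacle here: the argument is essentially a reinterpretation of Theorem \ref{general} in the irreducible case. The only point requiring care is verifying that the strict inequality $i_0(B,f)>i_0(f_y,f)$ (rather than merely $\neq$) is forced by the equality $i_0(\mathcal{B},f)=i_0(f_y,f)$; this is exactly the content of Corollary \ref{cor: desig} applied with $hf_y$ in place of $f_y$, and it hinges crucially on the fact that Lemma \ref{difiere} rules out the coincidence $i_0(B,f)=i_0(f_y,f)$.
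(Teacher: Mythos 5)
Your argument is correct and follows essentially the same route as the paper, which deduces the proposition from Corollary \ref{cor:polar separatrix} (equivalently Theorem \ref{general} plus Teissier's Lemma) together with Corollary \ref{cor: desig} and Lemma \ref{difiere}. You merely spell out the small point the paper leaves implicit, namely that since $h$ is a unit one has $i_0(hf_y,f)=i_0(f_y,f)$, so the min-formula applies to $\mathcal{B}=hf_y+B$ exactly as to $f_y+B$.
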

\begin{proof}
It is a consequence of Corollaries \ref{cor:polar separatrix} and \ref{cor: desig}.
\end{proof}

Next corollary gives us a characterization of generalized curve foliations with a single separatrix of genus $1$ in terms of the weighted order (see Definition \ref{worder}).

\begin{coro}
\label{coro:genus1}
Let  $f\in K(n,m)$, ${\mathcal F}_W \in Fol(f)$, where
$h\dd f+ pf \dd x+ A \dd x+ B\dd y$ is the Weierstrass form of $W$ with respect to $f$.  Then ${\mathcal F}_W$ is 
a generalized curve foliation if and only if  $\upsilon_{n,m}(\omega) > nm$,
where $\omega=A \dd x+ B\dd y$.
\end{coro}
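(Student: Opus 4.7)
The plan is to reduce Corollary \ref{coro:genus1} to Proposition \ref{prop:irred} by reinterpreting the intersection-number condition appearing there as a weighted-order condition on $\omega$. The enabling observation is that for $f\in K(n,m)$ with $\gcd(n,m)=1$, the intersection number against $f$ of any polynomial of low $y$-degree coincides with its $\upsilon_{n,m}$-order.

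First I would compute the reference values. Teissier's formulas \eqref{LTeissier} and \eqref{LTeissier2} together with $\mu(f)=(n-1)(m-1)$, $i_{0}(f,x)=n$ and $i_{0}(f,y)=m$ give $i_{0}(f_{y},f)=nm-m$ and $i_{0}(f_{x},f)=nm-n$. Next I would establish $i_{0}(g,f)=\upsilon_{n,m}(g)$ for every $g\in\C\{x\}[y]$ with $\deg_{y}g<n$: plugging a Puiseux parametrization $(t^{n},t^{m}+\cdots)$ of $f=0$ into $g=\sum g_{k,l}x^{k}y^{l}$ yields $g(x(t),y(t))=\sum g_{k,l}t^{kn+lm}(1+\cdots)$, and a cancellation of the leading $t$-term would require two distinct pairs $(k,l)\neq(k',l')$ with $kn+lm=k'n+l'm$; since $\gcd(n,m)=1$, this forces $|l-l'|$ to be a positive multiple of $n$, impossible under $\deg_{y}g<n$. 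Applied to $A$ and $B$ in the Weierstrass form this yields $i_{0}(A,f)=\upsilon_{n,m}(A)$ and $i_{0}(B,f)=\upsilon_{n,m}(B)$.

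Combining the previous steps, Proposition \ref{prop:irred} becomes: $\mathcal{F}_{W}$ is a generalized curve foliation if and only if $h$ is a unit and $\upsilon_{n,m}(B)>nm-m$. Remark \ref{**} makes this equivalent to requiring in addition $\upsilon_{n,m}(A)>nm-n$. Unpacking Definition \ref{worder} for a $1$-form one reads $\upsilon_{n,m}(\omega)=\min\{n+\upsilon_{n,m}(A),\,m+\upsilon_{n,m}(B)\}$, so the joint inequalities $\upsilon_{n,m}(A)>nm-n$ and $\upsilon_{n,m}(B)>nm-m$ are equivalent to the single inequality $\upsilon_{n,m}(\omega)>nm$.

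The delicate point is the ``$h$ is a unit'' clause of Proposition \ref{prop:irred}. The forward direction of the corollary is then automatic. In the converse direction I would need $\upsilon_{n,m}(\omega)>nm$ to force $h(0,0)\neq 0$ under the standing hypothesis $\mathcal{F}_{W}\in Fol(f)$. The intuition is that if $h$ were non-unit, then every summand of the Weierstrass decomposition would satisfy $\upsilon_{n,m}>nm$, so $\mathcal{B}=hf_{y}+B$ would have weighted order strictly above $nm-m$, hence $GSV(W,C)>0$ by \eqref{GSV}; combined with a multiplicity/blow-up analysis, this extra degeneracy would force the appearance of separatrices outside $\{f=0\}$, contradicting $\mathcal{F}_{W}\in Fol(f)$. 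Making this last implication fully rigorous is the main obstacle.
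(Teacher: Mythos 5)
Your proposal follows essentially the same route as the paper: reduce to Proposition \ref{prop:irred} via Corollary \ref{cor: desig} and Remark \ref{**}, convert the two intersection-number inequalities $i_0(B,f)>i_0(f_y,f)=nm-m$ and $i_0(A,f)>i_0(f_x,f)=nm-n$ into weighted-order inequalities, and package them as $\upsilon_{n,m}(\omega)=\min\{\upsilon_{n,m}(A)+n,\upsilon_{n,m}(B)+m\}>nm$. All the steps you actually carry out are correct, and in one respect you are more careful than the paper: you prove $i_0(g,f)=\upsilon_{n,m}(g)$ only for $g\in\C\{x\}[y]$ with $\deg_y g<n$ (via the $\gcd(n,m)=1$ non-cancellation argument), which is exactly the generality needed for $A$ and $B$, whereas the paper asserts the identity ``for any $H\in\C\{x,y\}$'', which is false as stated (take $H$ with two monomials on the line $ni+mj=\mathrm{const}$ whose images under the parametrization cancel, or $H\in(f)$).

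The ``delicate point'' you flag is genuine, but it is not a defect of your argument relative to the paper's: the paper's proof of the corollary also just says ``we finish using Proposition \ref{prop:irred}'' without verifying, in the converse direction, that $\upsilon_{n,m}(\omega)>nm$ together with ${\mathcal F}_W\in Fol(f)$ forces $h$ to be a unit. Note that Theorem \ref{general}, Corollary \ref{cor:polar separatrix} and Proposition \ref{prop:irred} all carry ``$h$ is a unit'' as part of the equivalence, and Corollary \ref{coro:genus1} is the only place where it is silently dropped. Your heuristic for why the case $h(0,0)=0$ should be impossible points in the right direction but, as you observe, only yields half of what is needed: if $h(0,0)=0$ and $\upsilon_{n,m}(\omega)>nm$, then every summand of the Weierstrass form has weighted order $>nm$, hence $\mult(W)\geq n>n-1=\mult(\dd f)$ and, by Theorem \ref{secondtype}, ${\mathcal F}_W$ is not even of second type; equivalently $GSV(W,C)>0$ by \eqref{GSV}. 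This shows that such a $W$ could not be a generalized curve foliation, i.e.\ that the case must be excluded for the corollary to hold, but it does not show the case cannot occur --- for that one must actually produce extra separatrices (or another contradiction with ${\mathcal F}_W\in Fol(f)$), which neither you nor the paper does. So either the corollary should be read with the additional clause ``$h$ is a unit'' (in which case both your argument and the paper's are complete), or the missing implication needs the blow-up analysis you gesture at; identifying this as the one real obstacle is the most valuable part of your write-up.
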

\begin{proof}

By Remark \ref{**} the equality $i_0(f_y+B,f)=\mu(f)+i_0(f,x)-1$ is equivalent to $i_0(f_x+A,f)=\mu(f)+i_0(f,y)-1$. It follows, by Corollary \ref{cor: desig}, that this is equivalent to claim $i_0(f_y,f)<i_0(B,f)$ and $i_0(f_x,f)<i_0(A,f)$. As $f\in K(n,m)$, we have $\mu(f)=nm-n-m+1$, $i_0(f_y,f)=nm-m$ and $i_0(f_x,f)=nm-n$ (see \eqref{LTeissier} 
and \eqref{LTeissier2}). So,  $nm<i_0(B,f)+m$ and $nm<i_0(A,f)+n$. Since $i_0(H,f)=v_{n,m}(H)$, for any $H\in\mathbb{C}\{x,y\}$ then $i_0(f_y+B,f)=\mu(f)+i_0(f,x)-1$ is equivalent to
$nm<\min\{v_{n,m}(B)+m,v_{n,m}(A)+n\}=v_{n,m}(\omega)$.We finish the proof using Proposition \ref{prop:irred} and Corollary \ref{cor: desig}.
\end{proof}

\begin{ejemplo}
Let $W_c = \dd(y^3-x^4)+cx^2y(3x \dd y-4y\dd x)$, $c\in\C^{*}.$	
 By  Proposition \ref{Fol(f)} we get ${\mathcal F}_{W_c}\in Fol(y^3-x^4)$. In this case $B(x,y)= -4cx^2y^2$ and $i_{0}(B,f)=14>\mu(f)+i_0(f,x)-1=9$, then ${\mathcal F}_{W_c}$ is a generalized curve foliation.		
\end{ejemplo}

 {\small  Evelia Rosa Garc\'{\i}a Barroso\\
Departamento de Matem\'aticas, Estad\'{\i}stica e I.O. \\
Secci\'on de Matem\'aticas, Universidad de La Laguna\\
Apartado de Correos 456\\
38200 La Laguna, Tenerife, Espa\~na\\
e-mail: ergarcia@ull.es  \\ORCID ID: 0000-0001-7575-2619}

\medskip

{\small Marcelo Escudeiro Hernandes\\
Departamento de Matem\'atica\\
Universidade Estadual de Maring\'a\\
Avenida Colombo 5790\\
Maring\'a, Maring\'a-PR 87020-900.\\
Brazil\\
e-mail: mehernandes@uem.br   \\ORCID ID: 0000-0003-0441-8503}

\medskip

{\small M. Fernando Hern\'andez Iglesias\\
Facultad de Ciencias Matem\'aticas, Escuela de Matem\'atica, \\
Universidad Nacional Mayor de San Marcos, \\
Cercado de Lima 15081, Peru\\
e-mail: {mhernandezi@unmsm.edu.pe}\\
ORCID ID: 0000-0003-0026-157X}


\begin{thebibliography}{9999999999}

\bibitem[A-M]{A-M} Abhyankar, S. S. and  Moh, T.:  Newton-Puiseux expansion and generalized Tschirnhausen transformation. I, II. J. Reine Angew. Math. 260, 47-83 (1973);  261, 29-54 (1973).

\bibitem[B]{Brunella}Brunella, M.: Some remarks on indices of holomorphic vector fields. Publicacions Matem\`atiques 41 , 527-544 (1997).

\bibitem[Cam-LN-S]{C-LN-S} Camacho,C., Lins Neto, A. and Sad, P.: Topological Invariants and Equidesingularization for Holomorphic Vector Fields.
        J. Differential Geometry 20,  143-174 (1984).

\bibitem[Cam-S]{C-S}  Camacho,C. and Sad, P.: Invariant Varieties Through Singularities of Holomorphic Vector  Fields. Annals of Mathematics, Second series Vol.115, 579-595, (1982).

\bibitem[Cav-Le]{Cav-Le} Cavalier, V. and Lehmann, D.: Localisation des r\'esidus de Baum-Bott, courbes g\'en\'eralis\'ees et K-th\'eorie. I. Feuilletages dans $\C^{2}$.  Comment. Math. Helv. 76, no. 4, 665-683 (2001).

\bibitem[FS-GB-SM]{FS-GB-SM} Fern\'andez-S\'anchez, P., Garc\'{\i}a Barroso, E.R. and  Saravia-Molina, N.: Characterization of second type plane foliations using Newton Polygons. arXiv: 1812.06530.

\bibitem[Hef]{Hefez}  Hefez, A.: Irreducible plane curve singularities. Real and complex singularities,
1-120, Lecture Notes in Pure and Appl. Math., 232, Dekker, New York, (2003).

\bibitem[Li]{Lins} Lins Neto, A.: Algebraic solutions of poynomial differential equations and foliations in dimension two, in Holomorphic Dynamics (Mexico, 1986), Springer, Lecture Notes 1345, 192-232 (1988).

\bibitem[Lo]{Loray} Loray, F.: R\'eduction formelle des singularit\'es cuspidales de champs de vecteurs analytiques. Journal of differential equations 158, 152-173 (1999).

\bibitem[Ma-Sal]{Mattei-Salem} Mattei, J.F. and Salem, E.: Modules formels locaux de feuilletages holomorphes. arXiv:math/0402256 (2004).


\bibitem[Po]{P-P} Popescu-Pampu, P.: Approximate roots. Valuation theory and its applications, Vol. II (Saskatoon, SK, 1999), 285-321, Fields Inst. Commun., 33, Amer. Math. Soc., Providence, RI, (2003).

\bibitem[R]{PR} Rouill\'e, P.: Th\'eor\`eme de Merle: cas des 1-formes de type courbes g\'en\'eralis\'ees. Bol. Soc. Bras. Mat., Vol 30 (3),  293-314 (1999).

\bibitem[SM]{Nancy} Saravia Molina, N.: Curva polar de una foliaci\'on asociada a sus ra\'{\i}ces aproximadas, PhD Thesis. Pontificia Universidad Cat\'olica del Per\'u, (2018).

\bibitem[Sei]{Seidenberg} Seidenberg, A.: Reduction of singularities of the differential equation A\dd y = B\dd x. American Journal of Mathematics, 90 (1),  248-269 (1968).

\bibitem[Su]{Suwa} Suwa, T.: Indices of holomorphic vector fields relative to invariant curves on surfaces.  Proc. Amer. Math. Soc. 123, no. 10, 2989-2997 (1995).

\bibitem[Te]{Teissier} Teissier, B.: Cycles \'evanescents, section planes et condition de Whitney. Ast\'erisque, 7-8, 285-362 (1973).

\bibitem[W]{Wall} Wall, C.T.C.: Singular points of plane curves. London Mathematical Society, Student Texts 63 (2004).

\bibitem[Z]{Zariski} Zariski, O.: Le Probl\`eme des Modules pour le Branches Planes. Cours donn\'e au Centre de Math\'ematiques de L' \'Ecole Polytechnique. English translation by Ben Lichtin: {\it The Moduli Problem for Plane Branches}, University Lecture Series, Vol. 39 AMS, (2006).

\end{thebibliography}
\end{document}